\numberwithin{equation}{section}
\newtheorem{thm}{Theorem}[section]
\newtheorem{lem}[thm]{Lemma}
\newtheorem*{introthm}{Theorem}
\theoremstyle{definition}
\newtheorem{dfn}[thm]{Definition}
\newtheorem{rem}[thm]{Remark}
\crefname{thm}{Theorem}{Theorems}
\crefname{cor}{Corollary}{Corollaries}
\crefname{lem}{Lemma}{Lemmas}
\crefname{prop}{Proposition}{Propositions}
\crefname{dfn}{Definition}{Definitions}
\crefname{ex}{Example}{Examples}
\crefname{claim}{Claim}{Claims}
\crefname{conj}{Conjecture}{Conjectures}
\crefname{rem}{Remark}{Remarks}
\crefname{figure}{Figure}{Figures}
\crefname{section}{Section}{Sections}
\crefname{subsection}{Section}{Sections}
\crefname{appendix}{Appendix}{Appendices}
\crefname{assum}{Assumption}{Assumptions}
\crefname{conv}{Notation}{Notations}
\crefname{introthm}{Theorem}{Theorems}
\crefname{introcor}{Corollary}{Corollaries}
\crefname{introconj}{Conjecture}{Conjectures}
\DeclareMathOperator{\wt}{wt}
\DeclareMathOperator{\Mat}{Mat}
\DeclareMathOperator{\Lie}{Lie}
\DeclareMathOperator{\ad}{ad}
\newcommand{\C}{\mathbb{C}}
\newcommand{\Z}{\mathbb{Z}}
\newcommand{\fg}{\mathfrak{g}}
\newcommand{\fh}{\mathfrak{h}}
\newcommand{\op}{\mathrm{op}}
\newcommand{\rfr}{\mathrm{fr}}
\newcommand{\rex}{\mathrm{ex}}
\newcommand{\tw}{\widetilde{w}}
\newcommand{\dprod}{\displaystyle\prod^{\longrightarrow}}
\newcommand{\rdprod}{\displaystyle\prod^{\longleftarrow}}
\begin{document}
\title[A cluster structure of the coordinate ring]
{A note on a cluster structure of the coordinate ring of a simple algebraic group}

\author[Hironori Oya]{Hironori Oya}
\address{Department of Mathematics, Institute of Science Tokyo, 2-12-1 Ookayama, Meguro-ku, Tokyo, 152-8551, Japan}
\email{hoya@math.titech.ac.jp}

\date{\today}

\begin{abstract}
We show that the coordinate ring of a simply-connected simple algebraic group $G$ over the complex number field coincides with the Berenstein--Fomin--Zelevinsky cluster algebra and its upper cluster algebra, at least when $G$ is not of type $F_4$.
\end{abstract}
\maketitle
\setcounter{tocdepth}{1}
\section{Introduction}
In the foundational paper \cite{CAI} of cluster algebras, Fomin and Zelevinsky conjectured that the coordinate rings of several varieties related to a simply-connected simple algebraic group $G$ have natural structures of cluster algebra. This conjecture has been widely investigated and verified. See, for example, \cite{CAI,CAII,CAIII,Sco,GLS08,GLS11,Wil,Lec,SSBW,FWZ,GY,SW21,GYmemo,SSB,CGGLSS}. 

As for the coordinate ring $\C[G]$ of $G$ itself, $\C[SL_2]$ is mentioned as a typical example of cluster algebra in \cite{CAI}. In \cite{FWZ}, it is shown that $\C[SL_n]$ is a cluster algebra for $n\geq 2$. However, in the literature, there does not seem to be a proof of the fact that $\C[G]$ has a cluster structure for a general simply-connected simple algebraic group $G$.

The goal of this paper is to provide a proof of this fact in the case when $G$ is not of type $F_4$. The precise statement is the following. 
\begin{introthm}[{=\cref{t:main}}]
Let $G$ be a simply-connected simple algebraic group over $\C$ which is not of type $F_4$. Then 
\[
\C[G]=\mathscr{A}=\mathscr{U},
\]
where $\mathscr{A}$ is the cluster algebra associated with the Berenstein--Fomin--Zelevinsky seed for the open double Bruhat cell in $G$, and $\mathscr{U}$ is the corresponding upper cluster algebra. Here we do not add the inverse of the frozen variables in the definition of $\mathscr{A}$ and $\mathscr{U}$. 
\end{introthm}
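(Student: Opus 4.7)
The plan rests on combining the Berenstein--Fomin--Zelevinsky (BFZ) theorem for the open double Bruhat cell with a divisorial extension argument, together with known results on the equality of cluster and upper cluster algebras for closely related varieties. Recall BFZ's theorem: $\C[G^{w_0,w_0}]=\mathscr{U}[F^{-1}]$, the upper cluster algebra with the frozen variables $F$ inverted. Since $\mathscr{A}\subseteq\mathscr{U}$ holds automatically via the Laurent phenomenon, my plan is to establish the three containments $\mathscr{A}\subseteq\C[G]\subseteq\mathscr{U}$ and $\mathscr{A}=\mathscr{U}$, from which the theorem follows.

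For $\C[G]\subseteq\mathscr{U}$ I would use that $G\setminus G^{w_0,w_0}$ is a union of prime divisors, each cut out scheme-theoretically by one of the frozen principal generalized minors (the $\Delta_{\omega_i}$ and $\Delta_{w_0\omega_i,\omega_i}$). Since $G$ is smooth, a function on $G^{w_0,w_0}$ extends to $G$ iff its valuation along each such divisor is non-negative. Expanding $f\in\C[G]$ in any seed as an element of $\mathscr{U}[F^{-1}]$ translates this into bounds on the frozen exponents; provided one checks that no mutable cluster variable vanishes generically along these divisors, the expansion must be polynomial in the frozens, placing $f$ in $\mathscr{U}$.

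For $\mathscr{A}\subseteq\C[G]$ I would apply a starfish-type criterion: since the initial cluster variables and the frozens are all generalized minors (hence regular on $G$), it suffices to verify that the frozens are irreducible and pairwise coprime, and that the two exchange monomials at every mutable vertex of the BFZ quiver are coprime in $\C[G]$. These are local checks on the explicit seed, and standard arguments then propagate regularity through all iterated mutations.

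The main obstacle is $\mathscr{A}=\mathscr{U}$: the BFZ seed is not acyclic or obviously locally acyclic in general, so one cannot invoke a single general theorem. My approach is to exploit categorification results for cluster algebras on unipotent subgroups and cells. For simply-laced types, use the additive categorification of Geiss--Leclerc--Schr\"oer via preprojective algebras (and/or the monoidal categorification via quiver Hecke algebras) to obtain $\mathscr{A}=\mathscr{U}$ for seeds on $\C[N]$, and transport the equality to $G$ by compatibility of the BFZ seed with seeds coming from the open Gauss factorization $N_-\times H\times N\hookrightarrow G$ and from doubly reduced expressions. For the non-simply-laced types $B_n$, $C_n$, $G_2$, a folding argument from $D_{n+1}$, $A_{2n-1}$, $D_4$ respectively should transfer the equality. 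The case $F_4$ is not covered by this strategy because, although $F_4$ arises from $E_6$ by diagram folding, the BFZ seed for $\C[G_{E_6}]$ does not appear to descend under the relevant involution to the BFZ seed for $\C[G_{F_4}]$ in a cluster-compatible way. I expect this last step, namely $\mathscr{A}=\mathscr{U}$ outside the simply-laced case, to be the main technical difficulty, and the failure of a folding-based argument in type $F_4$ to be the reason for its exclusion from the main theorem.
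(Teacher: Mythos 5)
Your proposal follows a genuinely different route from the paper, and the difference matters: you try to prove $\mathscr{A}=\mathscr{U}$ head-on, whereas the paper sidesteps this entirely by applying the factorial cluster algebra theorem of Geiss--Leclerc--Schr\"oer \cite[Theorem 1.4, Corollary 1.5]{GLS13}. That theorem says that if a unique factorization domain $\mathscr{O}$ sits inside $\mathscr{A}$ and contains two disjoint clusters together with the frozens, then automatically $\mathscr{O}=\mathscr{A}=\mathscr{U}$. Since $\C[G]$ is a UFD by Kac--Peterson, and the two BFZ seeds attached to $(-\mathbf{i},\mathbf{j})$ and $(\mathbf{i},-\mathbf{j})$ are disjoint (one uses only non-dominant extremal weights, the other only $\varpi_s$'s), the whole theorem reduces to the \emph{single} inclusion $\C[G]\subseteq\mathscr{A}$, i.e.\ to showing that $\C[G]$ is generated by generalized minors. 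This is what the paper then proves type by type, via a faithful minuscule representation when one exists (types $A$--$D$, $E_6$, $E_7$, from \cite{IOS23}) and, for $E_8$ and $G_2$, via a quasi-minuscule representation $V$ admitting a surjective $\fg$-module map $m\colon V\otimes V\to V$ with $m(V_0\otimes V_0)=0$. The exclusion of $F_4$ is \emph{not} because a folding argument fails: it is because the 26-dimensional quasi-minuscule $F_4$-module $V(\varpi_4)$ has $\dim V(\varpi_4)_0=2$, and the unique (up to scalar) surjection $V(\varpi_4)\otimes V(\varpi_4)\to V(\varpi_4)$, coming from the exceptional Jordan algebra, does \emph{not} kill $V_0\otimes V_0$, so the technique does not apply.

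Your three individual containments each carry gaps. For $\mathscr{A}=\mathscr{U}$, neither the preprojective-algebra nor the quiver-Hecke-algebra categorification results give $\mathscr{A}=\mathscr{U}$ for the BFZ seed on $\C[G]$; the known categorification statements concern $\C[N]$, unipotent cells, or quantum coordinate rings of unipotent subgroups, and the "transport" along the Gauss decomposition $N_-\times H\times N\hookrightarrow G$ is not an isomorphism of cluster algebras, so equality does not pull through in the way you suggest. Your folding argument for $B_n,C_n,G_2$ is likewise unsubstantiated at the level of cluster algebra identities; folding statements compatible with the BFZ combinatorics are not in the literature. For $\C[G]\subseteq\mathscr{U}$, the valuation argument has the right flavor, but translating "non-negative order along the boundary divisors" into "non-negative frozen exponents" requires knowing that each frozen minor is a local equation for exactly one boundary component and that no cluster monomial from a non-frozen variable acquires poles or zeros there, neither of which you have verified; the paper never needs this. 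For $\mathscr{A}\subseteq\C[G]$, a starfish-type argument would require showing that, after each mutation, the exchange relation numerator is divisible by the old cluster variable \emph{inside $\C[G]$}, which is a nontrivial coprimality/divisibility check you have not carried out. In short, each of your three steps is a substantial open sub-project, whereas the paper's GLS reduction collapses them all to a concrete representation-theoretic fact about generators of $\C[G]$.
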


In \cite[Theorem 2.10]{CAIII}, it is shown that $\mathscr{U}^{\circ}$ coincides with  the coordinate ring $\C[G^{w_0, w_0}]$ of the open double Bruhat cell $G^{w_0, w_0}$ in $G$, and in \cite{GYmemo}, \cite[Theorem 4.13]{SW21}, it is shown that $\mathscr{U}^{\circ}=\mathscr{A}^{\circ}$, including the case of type $F_4$. Here $\mathscr{A}^{\circ}$ and $\mathscr{U}^{\circ}$ denote the cluster and upper cluster algebras obtained from $\mathscr{A}$ and $\mathscr{U}$, respectively, by adding the inverses of the frozen variables. However, the fact that  $\C[G^{w_0, w_0}]=\mathscr{A}^{\circ}=\mathscr{U}^{\circ}$ does not immediately imply $\C[G]=\mathscr{A}=\mathscr{U}$. Indeed, all the equalities $\C[G]=\mathscr{A}$, $\mathscr{A}=\mathscr{U}$, and $\C[G]=\mathscr{U}$ seem to be non-trivial in general, while the inclusion $\mathscr{A}\subset \mathscr{U}$ follows from the general fact, known as the Laurent phenomenon, of the theory of cluster algebras (\cite[Theorem 3.1]{CAI}, \cite[Proposition 11.2]{CAII}). The delicate relation between $\mathscr{A}^{\circ}=\mathscr{U}^{\circ}$ and $\mathscr{A}=\mathscr{U}$ is discussed in \cite{BMS}. The third equality $\C[G]=\mathscr{U}$ was announced in \cite[Claim 7.6]{Qin} for an arbitrary $G$  without proof, and the proof is now provided in a very recent preprint \cite[Theorem B.1]{QY}\footnote{While the author was preparing this paper, he was informed that a proof of $\C[G] = \mathscr{U}$ is given in \cite{QY} using a different method. The author is grateful to Fan Qin and Milen Yakimov for letting him know about their ongoing work.}.

Here we make a remark on an application of the main result of this paper. When we study the algebraic structure on the ring of regular functions on some variety $X$ (over $\C$), a morphism $X\to G$ often plays an important role, since it induces an algebra homomorphism $\C[G]\to \C[X]$ via pullback. In such a situation, the cluster structure on $\C[G]$ is useful for studying the cluster structure on $\C[X]$. For example\footnote{In a recent work \cite{FL}, Francone and Leclerc study the cluster structure on the coordinate ring of the scheme $B(G, c)$, called $(G, c)$-bands ($c$ is a Coxeter element), by using the projection maps $B(G, c)\to G$ and the result of this paper.}, when $X$ is the moduli space $\mathcal{A}_{G, \Sigma}^{\times}$ of generically decorated twisted $G$-local systems on a marked surface $\Sigma$ with boundaries, one can define a family of morphisms $\mathcal{A}_{G, \Sigma}^{\times}\to G$, called Wilson lines \cite{IOS23}. These morphisms are essentially used in the study of the cluster structure on $\C[\mathcal{A}_{G, \Sigma}^{\times}]$. Indeed, in \cite{IOS23}, $\C[G]=\mathscr{A}$ was implicitly proved in the case when $G$ is not of type $E_8, F_4, G_2$, and it is precisely the assumption on $G$ in \cite[Theorem 1]{IOS23}. By the main result of this paper, we can show that the result parallel to \cite[Theorem 1]{IOS23} holds also in the case of type $E_8$ and $G_2$. The detailed discussion will appear in a separate publication. Note that we cannot restrict the Wilson lines to the morphisms $\mathcal{A}_{G, \Sigma}^{\times}\to G^{w_0, w_0}$, hence the corresponding morphisms $\C[G]\to \C[\mathcal{A}_{G, \Sigma}^{\times}]$ cannot be extended to 
$\C[G^{w_0, w_0}]\to \C[\mathcal{A}_{G, \Sigma}^{\times}]$ in general.

In our proof of the main theorem, the result \cite[Theorem 1.4]{GLS13} (see \cref{t:GLS}) is essential. It reduces the proof of the equalities in the main theorem to the proof of the inclusion $\C[G]\subset \mathscr{A}$. Here we do not use the assumption that $G$ is not of type $F_4$. Then, under the assumption, we prove $\C[G]\subset \mathscr{A}$ by showing that $\C[G]$ is generated by the generalized minors. This fact is shown in \cite[Proposition 2.2]{IOS23} when $G$ is of type $A_n$, $B_n$, $C_n$, $D_n$, $E_6$ or $E_7$. Therefore, the main contribution of this paper is to show this property in the cases of type $E_8$ and $G_2$. 
\cref{r:F4} explains why the techniques developed here to deal with the $E_8$ and $G_2$ cases do not work for $F_4$.

\subsection*{Acknowledgments}
The author would like to thank Tsukasa Ishibashi and Linhui Shen for numerous helpful discussions and comments. He is greatly indebted to Bernard Leclerc, whose question served as a strong motivation for writing this paper. He also wishes to express his thanks to Fan Qin and Milen Yakimov for kindly explaining their works.
He is grateful to Luca Francone and Bernard Leclerc for letting him know their ongoing works. 
He would like to thank the anonymous referees for their careful reading of the manuscript and for suggesting many improvements. 

This work was supported by JSPS KAKENHI Grant-in-Aid for Early-Career Scientists, Grant Number 23K12950.

\section{Prerequisites}\label{s:pre}

\subsection{Simple algebraic groups}
Let $\fg$ be a complex finite dimensional simple Lie algebra associated with a Cartan matrix $A=(a_{st})_{s, t\in S}$. That is, $\fg$ is the complex Lie algebra generated by $\{e_s, f_s, h_s\mid s\in S\}$ subject to the following relations: for $s, t\in S$, 
\begin{itemize}
	\item[(i)] $[h_s, h_t]=0$,
	\item[(ii)] $[h_s, e_{t}]=a_{st}e_{t}$, $[h_s, f_{t}]=-a_{st}f_{t}$, 
	\item[(iii)] $[e_{s},f_{t}]=\delta_{st}h_s$, 
	\item[(iv)] ${(\ad e_{s})^{1-a_{st}}(e_{t})=0}$ and ${(\mathrm{ad} f_{s})^{1-a_{st}}(f_{t})=0}$ for $s\neq t$. 
    
    \noindent Here $(\ad x)(y):=[x, y]$ for $x, y\in \fg$.
\end{itemize} 
We set $S=\{1,\dots, n\}$, following the labeling in \cite[Chapter 4]{Kac}. Let $\fh:=\sum_{s\in S}\mathbb{C}h_s$ be a Cartan subalgebra of $\fg$. Define the \emph{weight lattice} $P$ and the monoid of \emph{dominant integral weights} $P_+$ by  
\begin{align*}
&P:=\{\mu\in \fh^{\ast}\mid \langle h_s, \mu\rangle\in \mathbb{Z}\text{ for all } s\in S\},\\
&P_+:=\{\lambda\in P\mid \langle h_s, \lambda \rangle\geq 0\text{ for all } s\in S\}. 
\end{align*}
Denote by $\varpi_s\in P_+$ the \emph{fundamental weight} satisfying $\langle h_t, \varpi_s\rangle=\delta_{ts}$ for $t\in S$. Denote by $\Pi=\{\alpha_s\}_{s\in S}, \Phi$, and $\Phi^+$ the set of \emph{simple roots}, the \emph{root system} of $\fg$, and the set of \emph{positive roots}, respectively. We regard $\Pi, \Phi, \Phi^+$ as subsets of $P$. Set $Q\coloneqq \sum_{s\in S}\Z \alpha_{s}$ and $Q_+\coloneqq \sum_{s\in S}\Z_{\geq 0} \alpha_{s}$. There is a partial ordering on $P$ defined by $\mu\leq \lambda$ if and only if $\lambda-\mu\in Q_+$. We write $\mu <\lambda$ if $\mu\leq \lambda$ and $\mu\neq \lambda$. 

Let $W$ be the Weyl group of $\fg$, generated by the simple reflections $\{r_s \mid s\in S\}$. The identity element of $W$ is denoted by $e$. 
The Weyl group $W$ acts on $P$ by 
\[
r_s(\mu)=\mu-\langle h_s, \mu\rangle\alpha_s
\]
for $\mu\in P$ and $s\in S$. For $w\in W$, let
\[
\ell(w)\coloneqq\min \{\ell\in \Z_{\geq 0} \mid w=r_{s_1}\cdots r_{s_\ell}\},
\]
be the \emph{length} of $w$. It is known that $W$ has a unique element of maximum length, which is denoted by $w_0$.  
For $w\in W$, set 
\[
R(w)\coloneqq \{(s_1,\dots, s_{\ell(w)})\in S^{\ell(w)}\mid w=r_{s_1}\cdots r_{s_{\ell(w)}}\}.
\]
(Notice that $R(e)$ is the empty set $\varnothing$.) An element of $R(w)$ is called a \emph{reduced word} of $w$. 

For $(u, v)\in W\times W$, $(s_1,\dots, s_{\ell(u)+\ell(v)})\in (S\sqcup -S)^{\ell(u)+\ell(v)}$ is called a \emph{double reduced word} of $(u, v)$ if $(-s_{i_1},\dots, -s_{i_k})\in R(u)$ and $(s_{j_1},\dots, s_{j_m})\in R(v)$, where 
\begin{itemize}
    \item $\{i\mid s_{i}<0,\ 1\leq i\leq \ell(u)+\ell(v)\}=\{i_1,\dots, i_k\}$ and $i_1<\cdots< i_k$,
    \item $\{j\mid s_{j}>0,\ 1\leq j\leq \ell(u)+\ell(v)\}=\{j_1,\dots, j_m\}$ and $j_1<\cdots< j_m$. 
\end{itemize}
The set of double reduced words of $(u, v)$ is written as $R(u, v)$. 

Let $G$ be a simply-connected simple algebraic group over $\mathbb{C}$ whose Lie algebra $\Lie(G)$ is isomorphic to $\mathfrak{g}$, and fix a maximal torus $H$ of $G$. We consider an isomorphism $\Lie(G)\xrightarrow[]{\sim}\mathfrak{g}$ which maps the Lie algebra of $H$ to $\fh$. For $s\in S$, we write the root subgroup of $G$ corresponding to $\alpha_s$ (resp.~$-\alpha_s$) as $U_s^+$ (resp.~$U_s^-$). We take additive one-parameter subgroups $x_{s}\colon \C\xrightarrow{\sim} U_s^+$ and $y_{s}\colon \C\xrightarrow{\sim} U_s^-$ so that there exists a homomorphism $\varphi_s\colon SL_2(\mathbb{C})\to G$ of algebraic groups, satisfying 
\begin{align*}
\varphi_s\left(\begin{pmatrix}
1&t\\
0&1
\end{pmatrix}\right)&=x_s(t),&
\varphi_s\left(\begin{pmatrix}
1&0\\
t&1
\end{pmatrix}\right)&=y_s(t). 
\end{align*}

For $s\in S$, set 
\[
\overline{r}_{s}\coloneqq \varphi_s\left(\begin{pmatrix}
0&-1\\
1&0
\end{pmatrix}\right).
\]
For $w\in W$, take $(s_1,\dots, s_{\ell})\in R(w)$ and set $\overline{w}:=\overline{r}_{s_1}\cdots \overline{r}_{s_{\ell}}\in G$. It is known that $\overline{w}$ does not depend on the choice of reduced words of $w$. 

Let $V$ be a finite dimensional $\fg$-module. There is a unique $G$-module structure on $V$ satisfying 
\begin{align*}
x_s(t)\cdot v&=\sum_{k=0}^\infty \frac{t^k}{k!}e_s^k\cdot v&
y_s(t)\cdot v&=\sum_{k=0}^\infty \frac{t^k}{k!}f_s^k\cdot v
\end{align*}
for $s\in S, t\in\C$ and $v\in V$. Note that $e_s$ and $f_s$ act nilpotently on $V$. In the following, we tacitly regard finite dimensional $\fg$-modules as $G$-modules via this correspondence. 

Let $V$ be a finite dimensional $\fg$-module. Its dual space $V^{\ast}$ is considered as a $\fg$-module via
\[
\langle x\cdot f, v\rangle\coloneqq -\langle f, x\cdot v\rangle
\]
for $x\in \fg$, $f\in V^{\ast}$ and $v\in V$. For $\mu \in P$, we set
	\[
	V_{\mu}:= \{v \in V \mid h\cdot v=\langle \mu, h \rangle v\ \text{\ for\ all\ } h \in \fh\}.
	\]
When $V_{\mu}\neq \{0\}$, $\mu$ is called a \emph{weight} of $V$, and for $v\in V_{\mu}$, we write $\wt v\coloneqq \mu$.

Let $\C[G]$ be the coordinate ring of $G$. For a finite dimensional $\fg$-module $V$, $f\in V^{\ast}$ and $v\in V$, we can define an element $C^V(f, v)\in \C[G]$, called a \emph{matrix coefficient} of $V$, by  
\[
g\mapsto \langle f, g\cdot v\rangle
\]
for $g\in G$. Note that, if $V$ and $V'$ are finite dimensional $\fg$-modules, then 
\begin{align*}
C^V(f, v)C^{V'}(f', v')=C^{V\otimes V'}(f\otimes f', v\otimes v')\label{eq:mult}
\end{align*}
for $f\in V^{\ast}, v\in V$ and $f'\in (V^{\prime})^\ast, v'\in V'$. Here $f\otimes f'\in V^{\ast}\otimes (V^{\prime})^\ast$ is considered as the element of $(V\otimes V')^{\ast}$ given by $v\otimes v'\mapsto \langle f, v\rangle\langle f', v'\rangle$ for $v\in V$ and $v'\in V'$. Note that this correspondence gives an isomorphism $V^{\ast}\otimes (V^{\prime})^\ast\simeq (V\otimes V')^{\ast}$ of $\fg$-modules, and we always identify these two spaces by this isomorphism. 

For $\lambda\in P_{+}$, let $V(\lambda)$ be the irreducible $\fg$-module of highest weight $\lambda$. Then we have an isomorphism of $G\times G$-modules 
\[
\bigoplus_{\lambda\in P_+}V(\lambda)^{\ast}\boxtimes V(\lambda)\xrightarrow{\sim}\C[G],\ 
f\otimes v\mapsto C^{V(\lambda)}(f, v)\ (f\in V(\lambda)^{\ast}, v\in V(\lambda)),
\]
which is an algebraic version of the Peter--Weyl theorem. See, for example, \cite[Theorem 4.2.7]{GW}.

We fix a highest weight vector of $V(\lambda)$ and denote it by $v_{\lambda}$. For $w\in W$, set 
\begin{align*}
v_{w\lambda}\coloneqq \overline{w}\cdot v_{\lambda}\in V(\lambda)_{w\lambda}, 
\end{align*}
and define $f_{w\lambda}\in (V(\lambda)^{\ast})_{-w\lambda}$ by $\langle f_{w\lambda}, v_{w\lambda}\rangle=1$. 

For $\lambda\in P_+$ and $w,w'\in W$, we write 
\[
\Delta_{w\lambda, w'\lambda}\coloneqq C^{V(\lambda)}(f_{w\lambda},v_{w'\lambda})
\]
and call it a \emph{generalized minor}. Note that we have 
\[
\Delta_{w\lambda, w'\lambda}\Delta_{w\lambda', w'\lambda'}=\Delta_{w(\lambda+\lambda'), w'(\lambda+\lambda')}
\]
for $\lambda, \lambda'\in P_+$ and $w,w'\in W$.
\subsection{Cluster algebras} In this subsection, we recall a result of \cite{GLS13} on cluster algebras which is essential in this paper (see \cref{t:GLS}).  
We do not review the definition of cluster algebras of geometric type, but we always follow the definitions in \cite[Section 1]{GLS13} unless otherwise specified. See \cite{GLS13} for all missing definitions. 

Let $J$ be a finite set with $|J|>1$, and $J_{\rex}\subset J$ be a nonempty subset. Write $J_{\rfr}\coloneqq J\setminus J_{\rex}$. Take a $J\times J_{\rex}$-matrix $B=(b_{ij})_{i\in J, j\in J_{\rex}}\in \Mat_{J\times J_{\rex}}(\Z)$ with integer entries, whose principal part $B^{\circ}=(b_{ij})_{i, j\in J_{\rex}}$ is skew-symetrizable and connected. 
Then we can consider a seed $(\bm{x}=(x_i)_{i\in J}, B)$ of the ambient field $\mathscr{F}$. We recall that the field $\mathscr{F}$ is assumed to be freely generated over $\C$ by the elements $x_i$ for $i\in J$. 
Two seeds $(\bm{x}', B')$ and $(\bm{x}'', B'')$ are said to be \emph{mutation equivalent}, and written as $(\bm{x}', B')\sim (\bm{x}'', B'')$, if there exist $i_1,\dots, i_t\in J_{\rex}$ satisfying 
\[
\mu_{i_t}\cdots \mu_{i_1}(\bm{x}', B')=(\bm{x}'', B'').
\]
Here $\mu_i$ denotes the mutation of a seed at the vertex $i$. The \emph{cluster algebra} $\mathscr{A}(\bm{x}, B)$ associated to the seed $(\bm{x}, B)$ is defined as the subalgebra of $\mathscr{F}$ generated over the ring $\C[x_i\mid i\in J_{\rfr}]$ by the set of \emph{cluster variables} 
\[
\mathcal{X}_{(\bm{x}, B)}\coloneqq \bigcup_{(\bm{x}, B)\sim (\bm{x}'=(x'_i)_{i\in J}, B')}\{x'_i\mid i\in J_{\rex}\}.
\]
The \emph{upper cluster algebra} $\mathscr{U}(\bm{x}, B)$ associated to $(\bm{x}, B)$ is defined as the subalgebra of $\mathscr{F}$
\[
\mathscr{U}(\bm{x}, B)\coloneqq \bigcap_{(\bm{x}, B)\sim (\bm{x}'=(x'_i)_{i\in J}, B')}\C[x_i^{\prime \pm 1}, x_j\mid i\in J_{\rex}, j\in J_{\rfr}].
\]
We have the inclusion $\mathscr{A}(\bm{x}, B)\subset \mathscr{U}(\bm{x}, B)$, which is known as the \emph{Laurent phenomenon} (\cite[Theorem 3.1]{CAI}, \cite[Proposition 11.2]{CAII}). 
The elements $x_i$ ($i\in J_{\rfr}$) are called \emph{frozen variables}. 
We remark that the inverses of the frozen variables are not added in the definition of the cluster algebras in this paper.

When $(\bm{x}', B')\sim (\bm{x}, B)$, $\bm{x}'$ is called a \emph{cluster} of $\mathscr{A}(\bm{x}, B)$. Two clusters $\bm{x}'=(x'_i)_{i\in J}$ and $\bm{x}''=(x''_i)_{i\in J}$ of $\mathscr{A}(\bm{x}, B)$ are said to be \emph{disjoint} if $\{x'_i\mid i\in J_{\rex}\}\cap \{x''_i\mid i\in J_{\rex}\}=\varnothing$. 

The following is one of the main results of \cite{GLS13}. 
\begin{thm}[{\cite[Theorem 1.4]{GLS13}}]\label{t:GLS}
    Let $\bm{x}'=(x'_i)_{i\in J}$ and $\bm{x}''=(x''_i)_{i\in J}$ be disjoint clusters of  $\mathscr{A}(\bm{x}, B)$. Assume that there exists a subalgebra $\mathscr{O}$ of $\mathscr{A}(\bm{x}, B)$ such that 
    \begin{itemize}
        \item[(i)] $\mathscr{O}$ is a unique factorization domain, 
        \item[(ii)] $\{x'_i, x''_i\mid i\in J_{\rex}\}\cup \{x_i\mid i\in J_{\rfr}\}\subset \mathscr{O}$.        
    \end{itemize}
    Then $\mathscr{O}=\mathscr{A}(\bm{x}, B)=\mathscr{U}(\bm{x}, B)$. 
\end{thm}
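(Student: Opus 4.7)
The plan is to prove $\mathscr{U}(\bm{x}, B) \subseteq \mathscr{O}$ directly; combined with the Laurent phenomenon $\mathscr{A}(\bm{x}, B) \subseteq \mathscr{U}(\bm{x}, B)$ and the hypothesis $\mathscr{O} \subseteq \mathscr{A}(\bm{x}, B)$, this pinches all three algebras together and yields $\mathscr{O} = \mathscr{A}(\bm{x}, B) = \mathscr{U}(\bm{x}, B)$.

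Fix $z \in \mathscr{U}(\bm{x}, B)$. By the definition of the upper cluster algebra, $z$ admits Laurent expansions with respect to both $\bm{x}'$ and $\bm{x}''$: there exist monomials $M' = \prod_{i \in J_{\rex}}(x'_i)^{a_i}$ and $M'' = \prod_{i \in J_{\rex}}(x''_i)^{b_i}$ with $a_i, b_i \in \Z_{\geq 0}$ such that $P' := zM' \in \C[x'_i : i \in J]$ and $P'' := zM'' \in \C[x''_i : i \in J]$. Assumption (ii) places each of $P'$, $P''$, $M'$, $M''$ inside $\mathscr{O}$. Using the UFD property of $\mathscr{O}$ (assumption (i)), write $z = A/B$ in lowest terms in $\mathrm{Frac}(\mathscr{O})$, with $A, B \in \mathscr{O}$ coprime. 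The identities $BP' = AM'$ and $BP'' = AM''$ force $B$ to divide both $M'$ and $M''$ in $\mathscr{O}$, hence to divide $\gcd_{\mathscr{O}}(M', M'')$.

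It therefore suffices to establish $\gcd_{\mathscr{O}}(M', M'') = 1$, which would make $B$ a unit and place $z = A/B \in \mathscr{O}$. By unique factorization, this reduces to proving that each exchangeable cluster variable $x'_i$ of $\bm{x}'$ is coprime in $\mathscr{O}$ to each exchangeable cluster variable $x''_j$ of $\bm{x}''$. The disjointness hypothesis gives $\{x'_i : i \in J_{\rex}\} \cap \{x''_j : j \in J_{\rex}\} = \varnothing$; the task is to upgrade this set-theoretic distinctness to the absence of a common irreducible factor in the UFD $\mathscr{O}$.

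This final step is the main obstacle. The natural route is to prove that every exchangeable cluster variable is an irreducible (hence prime) element of $\mathscr{O}$; then the disjointness of $\bm{x}'$ and $\bm{x}''$ immediately produces non-associate primes and the desired coprimality. Irreducibility of a cluster variable $x'_i$ is typically extracted from its exchange relation $x'_i\,\mu_i(x'_i) = M^+_i + M^-_i$, where $M^{\pm}_i$ are monomials in the remaining cluster variables of $\bm{x}'$, together with an equivariant grading on $\mathscr{O}$ (for instance coming from a torus action) that separates distinct cluster variables by degree and rules out non-trivial factorizations. Verifying irreducibility is thus the technical core of the theorem, and it is here, rather than in the formal gcd manipulation of the previous paragraph, that a genuine understanding of the ambient algebra $\mathscr{O}$ is needed.
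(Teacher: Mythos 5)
The paper does not prove this statement; it is quoted from Geiss--Leclerc--Schr\"oer \cite{GLS13} (Theorem~1.4 and Corollary~1.5), so there is no internal proof to compare against. Your reduction is nevertheless sound and follows the structure of the GLS argument: the pinching $\mathscr{O}\subseteq\mathscr{A}\subseteq\mathscr{U}\subseteq\mathscr{O}$, the passage through the two Laurent expansions of $z\in\mathscr{U}$, the equalities $BP'=AM'$, $BP''=AM''$ in the UFD $\mathscr{O}$, and the distillation to the claim $\gcd_{\mathscr{O}}(x'_i,x''_j)=1$ for all $i,j\in J_{\rex}$ are all correct.

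The gap is in the last step, and you acknowledge it. Two things are missing. First, irreducibility of the exchangeable cluster variables in $\mathscr{O}$ is not proved; the route you propose, an exchange relation combined with ``an equivariant grading on $\mathscr{O}$ coming from a torus action,'' is not available here, because $\mathscr{O}$ is an arbitrary UFD subalgebra with no given grading or torus action. In \cite{GLS13} the required primality and coprimality are instead deduced from the Laurent phenomenon, via the lemma that the only invertible elements of a cluster algebra of geometric type with non-invertible coefficients are the non-zero scalars. Second, even granting irreducibility, the set-theoretic disjointness $\{x'_i\mid i\in J_{\rex}\}\cap\{x''_j\mid j\in J_{\rex}\}=\varnothing$ does not ``immediately'' yield non-associate primes: distinct irreducibles in a UFD can still differ by a unit, so one must separately rule out $x'_i=u\,x''_j$ for a non-trivial unit $u\in\mathscr{O}$, and this again calls for the units lemma. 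Supplying these two facts is the actual substance of the theorem you set out to prove.
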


\subsection{Berenstein--Fomin--Zelevinsky seeds}
Let $(u, v)\in W\times W$. In \cite[Theorem 2.10]{CAIII}, it is shown that there is a structure of upper cluster algebra with invertible frozen variables on the coordinate ring $\C[G^{u, v}]$ of the double Bruhat cell $G^{u, v}=B^+\overline{u}B^+\cap B^-\overline{v}B^-$. Here $B^+$ and $B^-$ denote the Borel subgroups containing $H$ corresponding to $\Phi^+$ and $-\Phi^+$, respectively.

We consider the case when $u=v=w_0$. Then $G^{w_0, w_0}$ is an open subvariety of $G$. In \cite{CAIII}, the authors construct a seed $(\bm{x}_{\bm{s}}, B_{\bm{s}})$ of the field $\mathscr{F}=\C(G^{w_0, w_0})=\C(G)$ of rational functions on $G^{w_0, w_0}$ (hence, on $G$) associated with $\bm{s}\in R(w_0, w_0)$. In this paper, we do not review the definition of $B_{\bm{s}}$, but recall the definition of $\bm{x}_{\bm{s}}$. 

Let $N\coloneqq 2\ell (w_0)$ and $\bm{s}=(s_1,\dots, s_N)\in R(w_0, w_0)$. Set
\[
J=-S\cup\{1,\dots, N\}=\{-n,\dots, -1, 1,\dots, N\}. 
\]
For $k\in J$, set 
\[
k^+=k^+_{\bm{s}}\coloneqq  \min(\{j\mid j>k,\  |s_j|=|s_k| \} \cup \{ N + 1 \}),
\]
and 
\[
J_{\rfr}\coloneqq -S\cup \{k\in J\mid k^+=N+1\},\quad 
J_{\rex}\coloneqq J\setminus J_{\rfr}.
\]
For $k\in J$, we set 
\begin{align*}
    \Delta(k; \bm{s})\coloneqq \Delta_{\tw_{\leq k}\varpi_{|s_k|}, \tw_{>k}\varpi_{|s_k|}}.
\end{align*}
Here
\begin{align*}
\tw_{\leq k}&=\tw_{\leq k}(\bm{s})\coloneqq \dprod_{\ell; 1\leq \ell\leq k, s_{\ell}<0}r_{-s_{\ell}}\quad (k>0),&
\tw_{\leq k}&=e\quad (k<0),\\
\tw_{> k}&=\tw_{> k}(\bm{s})\coloneqq \rdprod_{\ell; k< \ell\leq N, s_{\ell}>0}r_{s_{\ell}}\quad (k>0),&
\tw_{>k}&=w_0\quad (k<0),
\end{align*}
and $\dprod$ (resp.~$\rdprod$) denotes the ordered product, taken in increasing (resp.~decreasing) order of the index $\ell$. Then 
\[
\bm{x}_{\bm{s}}=(\Delta(k; \bm{s}))_{k\in J}. 
\]
Note that the set of the frozen variables is $\{\Delta_{\varpi_s, w_0\varpi_s}, \Delta_{w_0\varpi_s, \varpi_s}\mid  s\in S\}$. 

\begin{lem}\label{l:genminor}
    For any $w, w'\in W$ and $s\in S$, there exist $\bm{s}\in R(w_0, w_0)$ and $k\in J$ such that 
    \[
    \Delta(k; \bm{s})=\Delta_{w\varpi_s, w'\varpi_s}.
    \]
\end{lem}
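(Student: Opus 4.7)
The plan is to construct $\bm{s} \in R(w_0, w_0)$ and $k \in J$ explicitly by a case analysis based on whether $w\varpi_s$ and $w'\varpi_s$ equal $\varpi_s$. I would exploit two standard flexibility facts: (a) any reduced word of any $v \in W$ extends on either side to a reduced word of $w_0$ (since $\ell(w_0 v^{-1}) + \ell(v) = \ell(w_0)$), and (b) the positive and negative subwords of a double reduced word of $(w_0, w_0)$ may be interleaved in any fashion.

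The first reduction is to replace $w, w'$ by the minimal-length coset representatives $u \in w W_{S\setminus\{s\}}$ and $u' \in w' W_{S\setminus\{s\}}$, so that $u\varpi_s = w\varpi_s$ and $u'\varpi_s = w'\varpi_s$. The key structural input is that $u$ is either $e$ or has every reduced expression ending in $r_s$, and analogously for $u'$; consequently every reduced expression of $u'^{-1}$ begins with $r_s$.

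In the generic case $u, u' \neq e$, I would take a reduced word of $u$ ending in $s$, extended to a reduced word $(a_1, \ldots, a_{\ell(w_0)})$ of $w_0$, and a reduced word of $u'^{-1}$ beginning with $s$, extended on the left to a reduced word $(c_1, \ldots, c_{\ell(w_0)})$ of $w_0$. The desired double reduced word is the interleaving
\[
\bm{s} = \bigl(c_1, \ldots, c_{\ell(w_0)-\ell(u')},\, -a_1, \ldots, -a_{\ell(u)},\, c_{\ell(w_0)-\ell(u')+1}, \ldots, c_{\ell(w_0)},\, -a_{\ell(u)+1}, \ldots, -a_{\ell(w_0)}\bigr),
\]
with $k = \ell(w_0) - \ell(u') + \ell(u)$; a direct computation from the definitions gives $\tw_{\leq k} = u$, $\tw_{>k} = u'$, and $|s_k| = s$, hence the claim in this case.

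I expect the main difficulty to be the edge cases $u = e$ or $u' = e$, since then the position $k$ above carries the wrong label $|s_k|$. Each such case requires a modified interleaving: for $u = e$ with $w'\varpi_s \neq w_0 \varpi_s$, I would place all positives first followed by all negatives, with the positive prefix being a reduced word of the minimal representative $w_1$ of $w_0 w' W_{S\setminus\{s\}}$, and take $k = \ell(w_1)$; this produces $\tw_{\leq k} = e$ and $\tw_{>k} = w_0 w_1$, whose action on $\varpi_s$ equals $w'\varpi_s$. When $w_1 = e$ (i.e., $w'\varpi_s = w_0 \varpi_s$), the choice $k = -s \in -S$ works directly, giving $\Delta(-s;\bm{s}) = \Delta_{\varpi_s, w_0\varpi_s}$. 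The symmetric sub-cases (with $u' = e$) are handled by the dual interleaving, and the case $u = u' = e$ is handled by an analogous construction with the positive prefix chosen as a reduced word of the minimal representative of $w_0 W_{S\setminus\{s\}}$, which is nontrivial because $w_0 \varpi_s \neq \varpi_s$. These verifications are purely combinatorial but require careful bookkeeping.
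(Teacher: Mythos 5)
Your proposal follows essentially the same construction as the paper: reduce $w,w'$ to elements with all reduced words ending in $s$ (or the identity), interleave suitable reduced words of $w_0$ into a double reduced word with blocks of positive and negative entries, and read off $k$ at the boundary between blocks. Your ``minimal coset representative of $wW_{S\setminus\{s\}}$'' is exactly the paper's replacement $w''$, and your generic interleaving with $k=\ell(w_0)-\ell(u')+\ell(u)$ coincides with the paper's choice $k=\ell(w)-\ell(w')+N/2$.

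Where the two proofs differ is precisely the edge cases, and here your extra care actually matters. The paper states the formula $k=\ell(w)-\ell(w')+N/2$ for all cases; but if (after replacement) $w=e$ and $w'\neq e$, the negative block is empty and position $k=N/2-\ell(w')$ falls at the end of the first positive block, carrying the letter $s_{j_{\ell(w')+1}}$. Since $s_{j_{\ell(w')}}=s$ by construction and two adjacent letters in a reduced word cannot coincide, $s_{j_{\ell(w')+1}}\neq s$, so $|s_k|\neq s$ and the formula fails there. (Similarly when $w=w'=e$ the paper's formula works only if one silently chooses the $s_j$-word to begin with $s$.) Your proposal explicitly flags these degenerate sub-cases and fixes them by placing all positives first with the positive prefix a reduced word of the minimal representative $w_1$ of $w_0w'W_{S\setminus\{s\}}$ (so that $\tw_{\le k}=e$, $\tw_{>k}=w_0w_1$, $|s_k|=s$), with the further sub-case $w_1=e$ handled by $k=-s$; the symmetric case $u'=e$, $u\neq e$ is indeed covered by the paper's formula (there $k=N/2+\ell(u)$ lands on a negative letter equal to $-s$) or by your dual arrangement. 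So the proposal is not only correct but slightly more complete than the paper's argument.
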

\begin{proof}
Take $(s_{i_1},\dots, s_{i_{N/2}}), (s_{j_1},\dots, s_{j_{N/2}})\in R(w_0)$ such that $\bm{s}_w\coloneqq (s_{i_1},\dots, s_{i_{\ell(w)}})\in R(w)$ with $s_{i_{\ell(w)}}=s$ and $\bm{s}_{w'}\coloneqq (s_{j_1},\dots, s_{j_{\ell(w')}})\in R(w')$ with $s_{j_{\ell(w')}}=s$. Here, if there are no reduced words of $w$ ending with $s$, then we may replace $w$ with $w''\in W$ such that 
\begin{itemize}
    \item $w\varpi_s=w''\varpi_s$, and
    \item $w''$ has a reduced word ending with $s$, or $w''=e$, 
\end{itemize}
and do the same  procedure for $w'$. Moreover, if $w=e$ (resp.~$w'=e$), there is no condition on $(s_{i_1},\dots, s_{i_{N/2}})$ (resp.~$(s_{j_1},\dots, s_{j_{N/2}})$). Set 
\begin{align*}
    &\bm{s}_{w'}^{\op}\coloneqq (s_{j_{\ell(w')}},\dots, s_{j_1})\in R((w')^{-1}),\\
    &\bm{s}_{w}'\coloneqq (s_{i_{\ell(w)+1}},\dots, s_{i_{N/2}})\in R(w^{-1}w_0),\\
    &\bm{s}_{w'}^{\prime \op}\coloneqq (s_{j_{N/2}},\dots, s_{j_{\ell(w')+1}})\in R(w_0w').
\end{align*}
Then, 
\begin{align*}
\bm{s}&\coloneqq (\bm{s}_{w'}^{\prime \op}, -\bm{s}_w, \bm{s}_{w'}^{\op}, -\bm{s}_{w}')\\
&=(\underbrace{s_{j_{N/2}},\dots, s_{j_{\ell(w')+1}}}, \underbrace{-s_{i_1},\dots, -s_{i_{\ell(w)}}}, \underbrace{s_{j_{\ell(w')}},\dots, s_{j_{1}}}, \underbrace{-s_{i_{\ell(w)+1}},\dots, -s_{i_{N/2}}})
\end{align*}
is an element of $R(w_0, w_0)$, and by definition, we have 
\[
\Delta(\ell(w)-\ell(w')+N/2; \bm{s})=\Delta_{w\varpi_s, w'\varpi_s}.
\]
\end{proof}
\begin{thm}[{\cite[Remark 2.14]{CAIII}, \cite[Section 3]{SW21}}]\label{p:mutequiv}
    For any $\bm{s}, \bm{s}'\in R(w_0, w_0)$, $(\bm{x}_{\bm{s}}, B_{\bm{s}})\sim (\bm{x}_{\bm{s}'}, B_{\bm{s}'})$.
\end{thm}
\begin{lem}\label{l:disjoint}
    For $(s_{i_1},\dots, s_{i_{N/2}}), (s_{j_1},\dots, s_{j_{N/2}})\in R(w_0)$, set 
    \[
    \bm{s}_1\coloneqq (-s_{i_1},\dots, -s_{i_{N/2}}, s_{j_1},\dots, s_{j_{N/2}}),\quad
    \bm{s}_2\coloneqq (s_{i_1},\dots, s_{i_{N/2}}, -s_{j_1},\dots, -s_{j_{N/2}}).
    \]
    Then the clusters $\bm{x}_{\bm{s}_1}$ and $\bm{x}_{\bm{s}_2}$ are disjoint. 
\end{lem}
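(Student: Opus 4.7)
My plan is to directly compute the cluster variables of $\bm{s}_1$ and $\bm{s}_2$ and argue by contradiction using the Peter--Weyl decomposition. A straightforward unraveling of $\tw_{\leq k}$ and $\tw_{>k}$ shows that every mutable cluster variable of $\bm{s}_1$ has $w_0\varpi_s$ (the lowest weight of $V(\varpi_s)$) as one entry of its weight pair, while every mutable cluster variable of $\bm{s}_2$ has $\varpi_s$ (the highest weight) as one entry. Specifically, using $r_{s_{j_{N/2}}}\cdots r_{s_{j_1}}=w_0^{-1}=w_0$ and the symmetric identity for $(s_{i_\bullet})$, I find
\[
\Delta(k;\bm{s}_1)=\begin{cases}\Delta_{r_{s_{i_1}}\cdots r_{s_{i_k}}\varpi_{s_{i_k}},\, w_0\varpi_{s_{i_k}}} & (1\le k\le N/2),\\ \Delta_{w_0\varpi_{s_{j_{k-N/2}}},\, r_{s_{j_{N/2}}}\cdots r_{s_{j_{k-N/2+1}}}\varpi_{s_{j_{k-N/2}}}} & (N/2<k\le N),\end{cases}
\]
and the analogous formula for $\bm{s}_2$ with the roles of $w_0\varpi_s$ and $\varpi_s$ exchanged (and $(s_{i_\bullet}),(s_{j_\bullet})$ swapped appropriately).

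Now suppose $\Delta(k_1;\bm{s}_1)=\Delta(k_2;\bm{s}_2)$ for some $k_1,k_2\in J_{\rex}$. The Peter--Weyl decomposition $\C[G]\cong\bigoplus_\lambda V(\lambda)^\ast\otimes V(\lambda)$ and the identification of extremal-weight spaces with one-dimensional subspaces force the two minors to lie in the same summand $V(\varpi_s)^\ast\otimes V(\varpi_s)$ and to have identical weight pairs. I then split into four cases according to whether each $k_i$ is $\le N/2$ or $>N/2$. In the two "off-diagonal" cases ($k_1\le N/2<k_2$ or $k_2\le N/2<k_1$), the entry $w_0\varpi_s$ of $\Delta(k_1;\bm{s}_1)$ gets matched directly with the entry $\varpi_s$ of $\Delta(k_2;\bm{s}_2)$, forcing $-\varpi_{s^\ast}=w_0\varpi_s=\varpi_s$, which is absurd since the left-hand side is antidominant and the right is dominant (both nonzero).

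In the two "diagonal" cases ($k_1,k_2\le N/2$ or $k_1,k_2>N/2$), matching the remaining entry forces some explicit reduced word to represent an element of the parabolic stabilizer $W_{S\setminus\{s\}}$ of $\varpi_s$. The key fact I will invoke is that any reduced expression of an element of $W_{S\setminus\{s\}}$ uses only generators $r_t$ with $t\ne s$. In the case $k_1,k_2\le N/2$, the matching of the first weights gives $r_{s_{i_1}}\cdots r_{s_{i_{k_1}}}\in W_{S\setminus\{s\}}$; but the very last letter $r_{s_{i_{k_1}}}=r_s$ shows this reduced expression does use $r_s$, a contradiction. In the case $k_1,k_2>N/2$, the matching of the second weights gives $r_{s_{j_{N/2}}}\cdots r_{s_{j_{k_1-N/2+1}}}\in W_{S\setminus\{s\}}$; here I use the mutability hypothesis $k_1\in J_{\rex}$, i.e.~$k_1^+\ne N+1$, which produces an index $m\in\{k_1-N/2+1,\dots,N/2\}$ with $s_{j_m}=s$, so again $r_s$ appears in the reduced expression.

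I expect no genuine obstacle here; the main care needed is bookkeeping with the ordered products $\dprod$ and $\rdprod$ (particularly the reversals that identify $r_{s_{j_{N/2}}}\cdots r_{s_{j_1}}$ with $w_0$) and a clean invocation of the Peter--Weyl correspondence to reduce equality of minors to equality of the associated weight pairs inside a single $V(\varpi_s)^\ast\otimes V(\varpi_s)$.
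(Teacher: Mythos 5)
Your proof is correct and takes essentially the same approach as the paper's: compute the cluster variables $\Delta(k;\bm{s}_1)$ and $\Delta(k;\bm{s}_2)$ explicitly, then distinguish them by comparing weight pairs via Peter--Weyl. The paper packages your case analysis into the single observation that every mutable variable of $\bm{x}_{\bm{s}_2}$ carries a dominant index $\varpi_s$, while neither index of a mutable variable of $\bm{x}_{\bm{s}_1}$ is dominant; your ``reduced word must stay inside $W_{S\setminus\{s\}}$'' argument is exactly the unpacking of that non-dominance claim.
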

\begin{proof}
The cluster variables of $\bm{x}_{\bm{s}_1}$ are of one of the following forms:
\begin{itemize}
    \item $\Delta_{w\varpi_s, w_0\varpi_s}$ for $w\in W, s\in S$ with $w\varpi_s<\varpi_s$.
    \item $\Delta_{w_0\varpi_s, w\varpi_s}$ for $w\in W, s\in S$ with $w\varpi_s<\varpi_s$.    
\end{itemize}
Note that neither $w_0\varpi_s$ nor $w\varpi_s$ with $w\varpi_s<\varpi_s$ is dominant. 

On the other hand, the cluster variables of $\bm{x}_{\bm{s}_2}$ are of one of the following forms:
\begin{itemize}
    \item $\Delta_{\varpi_s, w\varpi_s}$ for $w\in W, s\in S$.
    \item $\Delta_{w\varpi_s, \varpi_s}$ for $w\in W, s\in S$. 
\end{itemize}
Since $\varpi_s$ is dominant, every cluster variable of $\bm{x}_{\bm{s}_1}$ does not belong to $\bm{x}_{\bm{s}_2}$. 
\end{proof}
\section{Main result}\label{s:main}
\subsection{Main result}
The following is the main result of this paper. 
\begin{thm}\label{t:main}
Assume that $G$ is not of type $F_4$. Then, for $\bm{s}\in R(w_0, w_0)$, 
\[
\C[G]=\mathscr{A}(\bm{x}_{\bm{s}}, B_{\bm{s}})=\mathscr{U}(\bm{x}_{\bm{s}}, B_{\bm{s}}).
\]
Here recall that the frozen variables are non-invertible in $\mathscr{A}$ and $\mathscr{U}$. 
\end{thm}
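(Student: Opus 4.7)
The plan is to verify the hypotheses of \cref{t:GLS} with $\mathscr{O}=\C[G]$, which first requires establishing the inclusion $\C[G]\subset \mathscr{A}(\bm{x}_{\bm{s}}, B_{\bm{s}})$. By \cref{p:mutequiv} the cluster algebra is independent of the choice of $\bm{s}\in R(w_0, w_0)$, so I may fix a convenient $\bm{s}$ throughout.

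The core step is to show that $\C[G]$ is generated as a $\C$-algebra by the generalized minors $\{\Delta_{w\varpi_s,w'\varpi_s}\mid s\in S,\ w,w'\in W\}$. For types $A_n, B_n, C_n, D_n, E_6, E_7$ this is \cite[Proposition 2.2]{IOS23}, so the new content lies in the remaining non-$F_4$ types $E_8$ and $G_2$. My approach here is to fix a faithful fundamental representation $V(\varpi_s)$, note that by the Peter--Weyl theorem together with the multiplicativity formula \eqref{eq:mult} the matrix coefficients of $V(\varpi_s)^{\otimes k}$ generate $\C[G]$, and then rewrite each such matrix coefficient in terms of generalized minors. Matrix coefficients built from extremal vectors $v_{w\varpi_s}$ and covectors $f_{w'\varpi_s}$ are generalized minors by definition; the real task is to recover matrix coefficients of non-extremal (in particular zero-weight) vectors, which I would produce by projecting linear combinations of extremal tensor products $v_{w\varpi_s}\otimes v_{w'\varpi_s}$ onto the $V(\varpi_s)$-isotypic component inside $V(\varpi_s)^{\otimes 2}$ (and iterating into higher tensor powers if necessary). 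Once this generation statement is in hand, \cref{l:genminor} together with \cref{p:mutequiv} yields $\C[G]\subset \mathscr{A}(\bm{x}_{\bm{s}}, B_{\bm{s}})$, since every generalized minor is either a cluster or a frozen variable in some mutation-equivalent seed.

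To conclude, I apply \cref{t:GLS} to $\mathscr{O}=\C[G]$. Condition (i), that $\C[G]$ is a unique factorization domain, is the classical theorem of Popov for simply-connected semisimple algebraic groups. For condition (ii), I take $\bm{s}_1,\bm{s}_2$ as in \cref{l:disjoint}, so that $\bm{x}_{\bm{s}_1}$ and $\bm{x}_{\bm{s}_2}$ are disjoint clusters of $\mathscr{A}(\bm{x}_{\bm{s}}, B_{\bm{s}})$; all cluster and frozen variables in these two seeds are generalized minors, which are matrix coefficients and hence lie in $\C[G]$. \cref{t:GLS} then gives $\C[G]=\mathscr{A}(\bm{x}_{\bm{s}}, B_{\bm{s}})=\mathscr{U}(\bm{x}_{\bm{s}}, B_{\bm{s}})$, as desired.

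The main obstacle is the generation step in types $E_8$ and $G_2$. In these types no fundamental representation is minuscule, so extremal weight vectors alone fail to span any fundamental $V(\varpi_s)$, and the reconstruction of the remaining weight vectors from extremal tensor products requires a type-specific control of the decomposition of $V(\varpi_s)^{\otimes 2}$ and of the isotypic projection onto $V(\varpi_s)$. The expectation is that precisely this reconstruction breaks down in type $F_4$ (cf.~\cref{r:F4}), which explains why that type must be excluded from the present argument.
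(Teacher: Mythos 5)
Your overall architecture matches the paper's exactly: verify the hypotheses of \cref{t:GLS} with $\mathscr{O}=\C[G]$, using the UFD property of $\C[G]$ for condition~(i), \cref{l:disjoint} for condition~(ii), and reducing the inclusion $\C[G]\subset\mathscr{A}$ to the statement that $\C[G]$ is generated by generalized minors via \cref{l:genminor} and \cref{p:mutequiv}. All of that is correct and is precisely what the paper does.

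The genuine gap is in the generation step for types $E_8$ and $G_2$, and you have essentially flagged it yourself without closing it. Saying you would ``project linear combinations of extremal tensor products onto the $V(\varpi_s)$-isotypic component inside $V(\varpi_s)^{\otimes 2}$'' is the right intuition, but it does not by itself produce the zero-weight matrix coefficients from generalized minors: a priori the image of the extremal tensors under the projection need not span the zero-weight space, and iterating into higher tensor powers does not obviously help because the zero-weight problem recurs. The missing idea is a precise structural property of the projection $m\colon V\otimes V\to V$, namely that it can be chosen with $m(V_0\otimes V_0)=0$. This is what \cref{t:EG} verifies (for $E_8$: $V=\fg$ quasi-minuscule and $m$ the Lie bracket, which kills $\fh\otimes\fh$; for $G_2$: $V=V(\varpi_2)$ and $m$ induced from Cayley-algebra multiplication). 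Given such an $m$, \cref{l:Key} shows $m(V_{\neq 0}\otimes V_{\neq 0})=V$, so every vector, in particular every zero-weight vector, is an image of non-extremal-free (i.e.\ nonzero-weight, hence extremal) tensors; dually, $m^*$ sends $(V^*)_0$ into $\bigoplus_{\eta\neq0}(V^*)_{-\eta}\otimes(V^*)_{\eta}$. Only with both of these facts does the computation in \cref{t:genmin} express every matrix coefficient of $V$ as a polynomial in generalized minors. Moreover, since quasi-minuscule $V$ is the adjoint representation in type $E_8$, you need the center of $G$ to be trivial for $V$ to be faithful and give a closed immersion $G\hookrightarrow SL(V)$; this is specific to $E_8$, $F_4$, $G_2$, and must be invoked. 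So your proposal correctly identifies where the difficulty lies and predicts (correctly) that it is the same difficulty that fails in type $F_4$, but it does not supply the quasi-minuscule representation, the specific map $m$ with $m(V_0\otimes V_0)=0$, or \cref{l:Key}, which together constitute the actual new mathematics of the paper.
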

\begin{proof}
We apply \cref{t:GLS} by setting $\mathscr{O}\coloneqq \C[G]$. Indeed, it is known that $\C[G]$ is a unique factorization domain (see \cite[Theorem 3]{KP83} and the references therein). Moreover, $\C[G]$ satisfies the condition (ii) in \cref{t:GLS} by \cref{p:mutequiv} and \cref{l:disjoint}. Therefore, it remains to show that $\C[G]$ is a subalgebra of $\mathscr{A}(\bm{x}_{\bm{s}}, B_{\bm{s}})$. In other words, we need to show that $\mathscr{A}(\bm{x}_{\bm{s}}, B_{\bm{s}})$ contains generators of $\C[G]$. Hence, the proof is completed by the following theorem (\cref{t:mainminor}), together with \cref{l:genminor} and \cref{p:mutequiv}. 
\end{proof}
\begin{thm}\label{t:mainminor}
Assume that $G$ is not of type $F_4$. Then $\C[G]$ is generated by the generalized minors. 
\end{thm}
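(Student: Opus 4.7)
My plan is to reduce to matrix coefficients of fundamental representations via the Peter--Weyl theorem, and then handle the non-minuscule fundamental representations of $G_2$ and $E_8$ case by case. Denote by $R \subset \C[G]$ the subalgebra generated by all generalized minors. By the algebraic Peter--Weyl theorem, $\C[G] \cong \bigoplus_{\lambda \in P_+} V(\lambda)^{\ast}\otimes V(\lambda)$, and for $\lambda = \sum_{s\in S} n_s \varpi_s$ the Cartan component embeds $V(\lambda)$ as a direct summand $j\colon V(\lambda) \hookrightarrow \mathcal{V}\coloneqq \bigotimes_s V(\varpi_s)^{\otimes n_s}$ with $j(v_\lambda)$ equal to the tensor of highest weight vectors. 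Thus $C^{V(\lambda)}(f,v) = C^{\mathcal{V}}(\tilde f, j(v))$ for any $\tilde f \in \mathcal{V}^{\ast}$ with $\tilde f \circ j = f$, and \eqref{eq:mult} expresses matrix coefficients of $\mathcal{V}$ on decomposable arguments as products of matrix coefficients of the individual fundamental factors. So the task reduces to showing $C^{V(\varpi_s)}(\phi, v)\in R$ for every $s\in S$ and every $\phi\in V(\varpi_s)^{\ast}$, $v\in V(\varpi_s)$. When $V(\varpi_s)$ is minuscule, both $V(\varpi_s)$ and $V(\varpi_s)^{\ast}$ are spanned by their extremal weight vectors $v_{w\varpi_s}$, $f_{w\varpi_s}$, so the matrix coefficient is immediately a $\C$-linear combination of generalized minors. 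Since no fundamental $G_2$- or $E_8$-representation is minuscule, this direct route fails in the cases of interest and further argument is required.

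The main structural claim I would establish is: for each non-minuscule $V(\varpi_s)$ in types $G_2$ or $E_8$, there is a tensor product $\mathcal{V}' = V(\varpi_{s_1})\otimes \cdots \otimes V(\varpi_{s_k})$ containing $V(\varpi_s)$ as a direct summand, with inclusion $j'$ and projection $\pi$, such that every $v\in V(\varpi_s)$ admits a preimage $\tilde v\in \pi^{-1}(v)$ lying in the $\C$-span of tensors of extremal weight vectors of the factors $V(\varpi_{s_i})$, and dually every $\phi\in V(\varpi_s)^{\ast}$ admits an extension $\tilde\phi$ (with $\tilde\phi\circ j' = \phi$) lying in the $\C$-span of tensors of extremal covectors. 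Given this claim, $C^{V(\varpi_s)}(\phi,v) = C^{\mathcal{V}'}(\tilde\phi, \tilde v)$ expands via \eqref{eq:mult} into a polynomial in generalized minors, placing it in $R$. A useful simplification is that the image of extremal tensors under $\pi$ is automatically a Weyl-group-invariant subspace of $V(\varpi_s)$; hence it suffices to exhibit one nonzero projected vector in each $W$-orbit of weights (not a spanning set in each weight space).

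For $G_2$ the plan is to take $\mathcal{V}' = V(\varpi_1)\otimes V(\varpi_1)\cong V(2\varpi_1)\oplus V(\varpi_2)\oplus V(\varpi_1)\oplus \C$, which realizes both non-minuscule fundamental modules as direct summands, and then verify by explicit computation that the tensors $v_\alpha\otimes v_{-\alpha}$ over short roots $\alpha$ project to spanning sets of the zero weight spaces of $V(\varpi_1)$ (dimension $1$) and $V(\varpi_2)$ (dimension $2$); these zero weight spaces are the only weight spaces not already spanned by extremal vectors. For $E_8$ the same scheme is repeated fundamental-representation by fundamental-representation, with the tensor square of each $V(\varpi_s)$ (or a product of two distinct fundamental representations when needed) playing the role of $\mathcal{V}'$. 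I expect the principal obstacle to be precisely this $E_8$ verification: the fundamental $E_8$-modules are high-dimensional with intricate non-extremal weight structure, so identifying a suitable $\mathcal{V}'$ for each $s$ and confirming surjectivity of the relevant projections onto every weight space requires careful case-by-case analysis, even with the Weyl-group-orbit reduction. The exclusion of type $F_4$ must reflect the failure of this scheme for some fundamental $F_4$-representation, presumably because the multiplicity of the zero weight in that representation exceeds what projections of extremal tensors into the relevant summand can produce.
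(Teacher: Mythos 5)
Your Peter--Weyl reduction to fundamental representations is sound, but it forgoes the key shortcut on which the paper's argument rests: for $G$ of type $E_8$ or $G_2$ the center is trivial, so the unique quasi-minuscule fundamental representation $V$ (the $248$-dimensional adjoint for $E_8$, the $7$-dimensional $V(\varpi_2)$ for $G_2$) is already \emph{faithful} and gives a closed immersion $G\hookrightarrow SL(V)$. Hence $\C[G]$ is generated by matrix coefficients of that single representation, and the other fundamental modules never need to be touched. Rather than realizing each $V(\varpi_s)$ inside tensor squares and tracking projections of extremal tensors, the paper constructs one surjective $\fg$-module map $m\colon V\otimes V\to V$ with $m(V_0\otimes V_0)=0$ (the Lie bracket for $E_8$; a map induced from the Cayley algebra product for $G_2$), proves in \cref{l:Key} that $m(V_{\neq 0}\otimes V_{\neq 0})=V$ --- exactly the spanning statement you were reaching for, but specialized to the single representation that matters --- and feeds this into \cref{t:genmin} to express every matrix coefficient of $V$ as a polynomial in generalized minors.

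Beyond the difference in strategy, your sketch has concrete gaps. First, the Weyl-orbit reduction is unjustified: the span of projected extremal tensors is $W$-stable but is not a $\fg$-submodule, so for a non-extremal weight $\mu$ with $\dim V_\mu>1$ you only obtain a $W_\mu$-stable subspace of $V_\mu$; a single nonzero vector per $W$-orbit forces nothing unless the stabilizer $W_\mu$ acts irreducibly on $V_\mu$, which you do not verify. (Indeed, you contradict this reduction a line later by asking for a \emph{spanning set} of the $2$-dimensional zero weight space.) Second, for $G_2$ the claim that only the zero weight spaces fail to be spanned by extremal vectors is wrong for the $14$-dimensional adjoint module: its extremal weights are the six long roots, so the six short-root weight spaces are also non-extremal and must be produced from projections as well. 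Finally, the $E_8$ case is left entirely open: you would need to repeat the construction for seven further fundamental modules, the largest of dimension roughly $6.9\times 10^{9}$, with nontrivial weight multiplicities, and it is not even clear a priori that $V(\varpi_s)$ occurs in a suitable tensor product of fundamentals with the required extremal-tensor spanning property. The paper's choice of the faithful quasi-minuscule $V$ together with the structural condition $m(V_0\otimes V_0)=0$ is precisely what replaces this intractable case analysis by a short uniform argument.
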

\begin{rem}
    We do not state that the statements of \cref{t:main,t:mainminor} do not hold in the case when $G$ is of type $F_4$. Indeed, $\C[G] = \mathscr{U}$ is proved for an arbitrary $G$ in a very recent preprint \cite[Theorem B.1]{QY} by a different method.  
\end{rem}
\cref{t:mainminor} is already shown in \cite[Proposition 2.2]{IOS23} when $G$ is of type $A_n$, $B_n$, $C_n$, $D_n$, $E_6$ or $E_7$. Indeed, in these cases, $G$ admits a faithful minuscule representation $V$. This representation gives a closed immersion $\rho\colon G\to SL(V)$ (see \cite[Corollary 1.13]{Brion} and \cite[Proposition 2.1]{IOS23}). Therefore, the coordinate ring $\C[G]$ is generated by the matrix coefficients of $V$, hence, by the generalized minors since $V$ is minuscule. See \cite[Section 2.2]{IOS23} for more details. 

Therefore, the remaining cases for the proof of \cref{t:mainminor} are the cases of type $E_8$ and $G_2$. The remainder of this paper is devoted to the proof of \cref{t:mainminor} in the cases of type $E_8$ and $G_2$.

\subsection{Proof of \cref{t:mainminor} in the cases of type $E_8$ and $G_2$} 
Although we are mainly interested in the simple Lie algebras of type $E_8$ and $G_2$, all the arguments in this subsection are valid under the assumption that  $\fg$ is a complex simple Lie algebra whose rank is greater than $1$. Therefore, we will work under this assumption. 

For $\mu\in P$ and a $\fg$-module $V$ with weight space decomposition, we write 
    \begin{align*}
    \wt(V)\coloneqq \{ \mu \in P \mid V_{\mu} \neq \{0\} \},\qquad
    V_{\neq \mu}\coloneqq \bigoplus_{\nu\in P, \nu\neq \mu}V_{\nu}.
    \end{align*}

\begin{dfn}
    A non-trivial irreducible $\fg$-module $V(\lambda)$ is said to be \emph{quasi-minuscule} if $\wt(V(\lambda))=W\cdot \lambda\cup\{0\}$. 
\end{dfn}
\begin{lem}\label{l:qminwt}
Let $V$ be a quasi-minuscule $\fg$-module. Then there exists $s\in S$ such that $\wt(V)\setminus\{0\}=W\cdot \alpha_s$. In particular, $\wt(V)\subset Q_+\cup (-Q_+)$.  
\end{lem}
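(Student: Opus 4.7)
The plan is to find a chain from $\lambda$ down to the zero weight whose every step subtracts a simple root and whose intermediate vertices all remain nonzero. Because $V$ is quasi-minuscule, each nonzero intermediate vertex must lie in $W\lambda$; in particular the penultimate vertex $\mu_{k-1}$ on the chain equals the very simple root that is subtracted at the final step, and this immediately places $\lambda$ in the Weyl orbit of that simple root.

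First I would record that $0\in\wt(V)$ forces $\lambda\in Q_+$, since every weight of $V(\lambda)$ lies in $\lambda-Q_+$. Next, I would invoke the standard reachability fact for irreducible modules: if $\mu\in\wt(V(\lambda))$ with $\mu\neq\lambda$, then there exists $s\in S$ with $\mu+\alpha_s\in\wt(V(\lambda))$. The proof is immediate---otherwise $\fn^+$ annihilates $V(\lambda)_\mu$, producing a singular vector of weight $\mu\neq\lambda$ inside the irreducible module $V(\lambda)$, contradiction. Iterating this fact upward from $\mu=0$ and then reversing, I obtain a sequence $\lambda=\mu_0,\mu_1,\dots,\mu_k=0$ of weights with $\mu_{i+1}=\mu_i-\alpha_{j_i}$ for some $j_i\in S$.

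To finish, each $\mu_i$ with $i<k$ is nonzero and therefore lies in $W\lambda$ by the quasi-minuscule hypothesis. In particular $\mu_{k-1}=\alpha_{j_{k-1}}\in W\lambda$, so setting $s:=j_{k-1}$ yields $\wt(V)\setminus\{0\}=W\lambda=W\alpha_s$. The ``in particular'' assertion of the lemma is then automatic, since $W\alpha_s\subset\Phi\subset Q_+\cup(-Q_+)$ and $0\in Q_+\cap(-Q_+)$.

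The main thing to verify is that such a chain of simple-root subtractions from $\lambda$ to $0$ exists at all, and this is precisely what the reachability fact guarantees; once the chain is in hand, the conclusion is a one-line consequence of inspecting its penultimate step. So I do not anticipate a serious obstacle here---the whole proof is essentially the observation that the last weight before $0$ has to be a simple root, combined with the definition of quasi-minuscule.
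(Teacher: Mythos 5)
Your argument is correct, and its crux is the same as the paper's: one exhibits a simple root $\alpha_s$ in $\wt(V)$, and quasi\mbox{-}minuscularity then forces $\wt(V)\setminus\{0\}=W\cdot\alpha_s$. The paper does this in one step: since $V$ is irreducible and non\mbox{-}trivial, $V_0$ cannot be annihilated by every $e_s$ (otherwise $V_0$ would consist of highest weight vectors of weight $0$, so $V$ would be trivial), hence $e_s\cdot V_0\neq 0$ and $\alpha_s\in\wt(V)$ for some $s$---this is exactly your ``reachability fact'' applied once at $\mu=0$. The rest of the chain you construct up to $\lambda$, and the remark about its penultimate vertex, are not needed, since the first upward step already produces the required simple root.
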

\begin{proof}
    Since $V_0\neq \{0\}$ and $V$ is non-trivial, there exists $s\in S$ such that $e_s\cdot V_0\neq \{0\}$. Then $\alpha_s\in \wt (V)$, and by the definition of quasi-minuscule $\fg$-module, we have $\wt(V)=W\cdot \alpha_s\cup \{0\}$. The last assertion follows from $W\cdot \alpha_s\subset \Phi$. 
\end{proof}
\begin{lem}\label{l:0wtsp}
Let $V$ be a quasi-minuscule $\fg$-module. Then $\sum_{s\in S} f_se_s\cdot V_0=V_0$. 
\end{lem}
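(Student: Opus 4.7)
The plan is to exploit positivity via a unitary structure. I would equip $V = V(\lambda)$ with a positive-definite Hermitian inner product $(\cdot,\cdot)$ for which $f_s$ is the Hermitian adjoint of $e_s$ for every $s \in S$; such a form exists by Weyl's unitarian trick applied to a compact real form of $\fg$. Distinct weight spaces will then be orthogonal under this form, and its restriction to each $V_\mu$ (in particular to $V_0$) will be positive-definite and in particular non-degenerate.

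The key computation will be that, for $v \in V_0$ and $s \in S$,
\[
(v, f_s e_s v) \;=\; (e_s v, e_s v) \;=\; \|e_s v\|^2.
\]
Suppose now $v \in V_0$ is orthogonal, within $V_0$, to $\sum_{s \in S} f_s e_s V_0$. Pairing the orthogonality condition against $v$ itself gives $\|e_s v\|^2 = 0$, and hence $e_s v = 0$, for every $s \in S$. Such a $v$ is then a highest-weight vector of the irreducible module $V(\lambda)$, so lies in $V_\lambda$; since $\lambda \neq 0$ (as $V$ is non-trivial), $V_0 \cap V_\lambda = \{0\}$, forcing $v = 0$. Non-degeneracy of the inner product on $V_0$ will then yield $\sum_{s\in S} f_s e_s V_0 = V_0$.

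I anticipate no serious obstacle. If one prefers a purely algebraic route, one may replace the Hermitian form above by the contravariant Shapovalov bilinear form $(\cdot,\cdot)$ on $V(\lambda)$ satisfying $(e_s u, v) = (u, f_s v)$, which is non-degenerate on each weight space of the irreducible $V(\lambda)$. Then $(u, f_s e_s v) = (e_s u, e_s v)$, and the orthogonality forces $e_s u \perp e_s V_0$ inside $V_{\alpha_s}$. A short $\mathfrak{sl}_2^{(s)}$-computation, using that $2\alpha_s$ is never a root so $V_{2\alpha_s} = 0$, shows $e_s f_s w = 2w$ for any nonzero $w \in V_{\alpha_s}$; hence $e_s V_0 = V_{\alpha_s}$ whenever $V_{\alpha_s} \neq 0$, and non-degeneracy of the form on $V_{\alpha_s}$ lets one conclude $e_s u = 0$ for all $s$, so the same highest-weight argument finishes the proof.
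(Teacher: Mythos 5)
Your proof is correct, and it takes a genuinely different route from the paper's. The paper argues directly with generation: since $V$ is an irreducible highest weight module, $V_0=\sum_{s}f_s\cdot V_{\alpha_s}$; and since $V$ is also a lowest weight module, $V_{\alpha_s}=\sum_t e_t\cdot V_{\alpha_s-\alpha_t}$, which collapses to $e_s\cdot V_0$ because, by \cref{l:qminwt}, $\alpha_s-\alpha_t$ ($t\neq s$) is not a weight of the quasi-minuscule $V$. Substituting gives $V_0=\sum_s f_s e_s\cdot V_0$. You instead exploit a positive-definite contravariant Hermitian form with $f_s=e_s^{\ast}$: a vector $v\in V_0$ orthogonal to $\sum_s f_s e_s V_0$ satisfies $\|e_s v\|^2=(v,f_se_sv)=0$ for all $s$, hence is an $\fn^+$-invariant, hence lies in $V_\lambda$; since $\lambda\neq 0$ this forces $v=0$, and nondegeneracy on $V_0$ finishes the argument. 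One noteworthy feature of your Hermitian argument is that it never actually uses the quasi-minuscule hypothesis (only $\lambda\neq 0$ and $V_0\neq 0$), so it proves a more general statement than the paper's lemma; the paper's proof is shorter and stays within elementary weight-combinatorics, at the cost of relying on the special weight structure via \cref{l:qminwt}. Your Shapovalov-form variant is also correct and, via the $\mathfrak{sl}_2$ computation $e_sf_sw=2w$ on $V_{\alpha_s}$ (valid because $V_{2\alpha_s}=0$), essentially rederives the paper's identity $e_sV_0=V_{\alpha_s}$ along the way.
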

\begin{proof}
Since $V$ is a highest weight module, $V_0=\sum_{s\in S} f_s\cdot V_{\alpha_s}$. On the other hand, since $V$ is a lowest weight module, $V_{\alpha_s}=\sum_{t\in S}e_t\cdot V_{\alpha_s-\alpha_t}=e_s\cdot V_0$. Here the last equality follows from \cref{l:qminwt}. Therefore, $V_0=\sum_{s\in S} f_se_s\cdot V_{0}$. 
\end{proof}
\begin{lem}\label{l:Key}
Let $V$ be a quasi-minuscule $\fg$-module. Assume that there exists a surjective homomorphism $m\colon V\otimes V\to V$ of $\fg$-modules satisfying $m(V_0\otimes V_0)=\{0\}$. Then $m(V_{\neq 0}\otimes V_{\neq 0})=V$. 
\end{lem}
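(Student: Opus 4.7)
The plan is to set $Z := m(V_{\neq 0}\otimes V_{\neq 0})$ and prove $Z = V$ in three steps: first $V_0 \subset Z$ from weight-zero considerations, then observe that $Z$ is stable under (lifts of) the Weyl group $W$, and finally use a highest-weight argument to force some non-zero weight space of $V$ into $Z$. Weyl invariance then propagates the conclusion to the entire orbit $W\alpha_{s_0}$.

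First, since $m$ is surjective and $\fh$-equivariant, reading off the weight-$0$ part yields
\[
V_0 = \sum_{\alpha \in W\alpha_{s_0}} m(V_\alpha\otimes V_{-\alpha}) + m(V_0\otimes V_0),
\]
where $s_0\in S$ is as in \cref{l:qminwt}. The second summand vanishes by hypothesis, so $V_0\subset Z$. Second, Weyl group lifts act by $\bar w V_\mu = V_{w\mu}$, and since $w\cdot 0 = 0$ and $W\alpha_{s_0}$ is $W$-stable, they preserve the decomposition $V = V_0\oplus V_{\neq 0}$; hence $V_{\neq 0}\otimes V_{\neq 0}$ is $\bar w$-stable, and the $G$-equivariance of $m$ gives $\bar w Z = Z$. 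The identity $\bar w\cdot m(V_\alpha\otimes V_{-\alpha}) = m(V_{w\alpha}\otimes V_{-w\alpha})$, together with $V_0 \neq 0$ (forcing at least one summand above to be non-zero) and the transitivity of $W$ on $W\alpha_{s_0}$, upgrades this to $m(V_\alpha\otimes V_{-\alpha})\neq 0$ for every $\alpha\in W\alpha_{s_0}$.

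For the main third step, let $\lambda$ be the highest weight of $V$. The rank $\geq 2$ hypothesis together with the connectedness of the Dynkin diagram implies that no simple root is dominant, hence $\lambda$ is not a simple root; combined with $\lambda > 0$, this gives $\lambda \neq \pm\alpha_t$ for every $t\in S$. Choose $v\in V_\lambda$, $w\in V_{-\lambda}$ with $z := m(v\otimes w) \neq 0$ (possible by Step 2). For any $t\in S$,
\[
e_t\cdot z = m(e_tv\otimes w) + m(v\otimes e_tw);
\]
since $\lambda+\alpha_t \neq 0$ and $-\lambda+\alpha_t \neq 0$, both $e_tv$ and $e_tw$ lie in $V_{\neq 0}$, so $e_t z\in Z$. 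If $e_t z = 0$ for every $t\in S$, then $z$ would be a non-zero highest-weight vector in $V$ of weight $0$, contradicting the irreducibility of $V$ (whose highest weight $\lambda$ is non-zero). Hence there is $t_0\in S$ with $0\neq e_{t_0}z\in Z\cap V_{\alpha_{t_0}}$, which forces $\alpha_{t_0}\in W\alpha_{s_0}$. Every non-zero weight space of the quasi-minuscule $V$ is one-dimensional (Weyl-conjugate to the one-dimensional $V_\lambda$), so $V_{\alpha_{t_0}}\subset Z$; Step 2 then gives $V_\mu\subset Z$ for all $\mu\in W\alpha_{s_0}$, and combining with Step 1 yields $Z = V$.

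The heart of the argument is Step 3, where one must exhibit a single non-zero weight space inside $Z$. The rank $\geq 2$ hypothesis enters essentially and only here, via the fact that no simple root is dominant: it lets us pick $\lambda \neq \pm\alpha_t$ for every $t\in S$, so that the Leibniz expansion of $e_t\cdot m(v\otimes w)$ stays inside $Z$ rather than escaping into $m(V_0\otimes V_{\neq 0})$ or $m(V_{\neq 0}\otimes V_0)$.
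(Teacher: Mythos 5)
Your proof is correct and takes a genuinely different route from the paper's. The paper fixes a weight $\mu\in\wt(V)$, reduces by Weyl symmetry to $\mu=\lambda$ (the highest weight), and argues by \emph{contradiction}: if $m\bigl(\bigoplus_{\nu\neq 0,\lambda}V_\nu\otimes V_{\lambda-\nu}\bigr)=0$, then surjectivity forces $m(V_0\otimes V_\lambda)=V_\lambda$ (or its mirror), and a Leibniz computation with the operator $f_se_s$ — fed by the auxiliary \cref{l:0wtsp}, which asserts $\sum_s f_se_s\cdot V_0=V_0$ — produces a nonzero element of $m(V_{\alpha_s}\otimes V_{\lambda-\alpha_s})$, contradicting the hypothesis (via rank $\geq 2$, so $\alpha_s\neq\lambda$). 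You instead argue \emph{directly}: first show $V_0\subset Z:=m(V_{\neq 0}\otimes V_{\neq 0})$ from the weight-$0$ slice of surjectivity, then use Weyl invariance of $Z$ to get a nonzero $z=m(v_\lambda\otimes v_{-\lambda})\in Z\cap V_0$, and climb out of $V_0$ by noting $e_t\cdot z\in Z$ for all $t$ while irreducibility (the highest weight of $V$ is $\lambda\neq 0$) forces some $e_{t_0}z\neq 0$; one-dimensionality of nonzero weight spaces and Weyl transitivity on $W\alpha_{s_0}$ then finish. Both proofs hinge on the same two inputs — Weyl symmetry and the rank $\geq 2$ fact that simple roots are not dominant, which keeps the Leibniz term inside $Z$ — but your version is constructive rather than by contradiction, and it dispenses entirely with \cref{l:0wtsp}, replacing it with the cleaner observation that a nonzero weight-$0$ vector in an irreducible module of nonzero highest weight cannot be annihilated by every $e_t$.
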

\begin{proof}
Let $\mu\in \wt (V)$. It suffices to show that 
\[
m\left(\bigoplus_{\nu\in P; 0\neq \nu\neq \mu}V_{\nu}\otimes V_{\mu-\nu}\right)=V_{\mu}. 
\]
When $\mu=0$, the condition $m(V_0\otimes V_0)=0$ and the surjectivity of $m$ imply 
\[
m\left(\bigoplus_{\nu\in P; \nu\neq 0}V_{\nu}\otimes V_{-\nu}\right)=m\left(\bigoplus_{\nu\in P}V_{\nu}\otimes V_{-\nu}\right)=V_0.
\]
Assume that $\mu\neq 0$. Then we have $\overline{w}\cdot V_{\mu}=V_{w(\mu)}$, and 
\begin{align*}
\overline{w}\cdot m\left(\bigoplus_{\nu\in P; 0\neq \nu\neq \mu}V_{\nu}\otimes V_{\mu-\nu}\right)&=m\left(\bigoplus_{\nu\in P; 0\neq \nu\neq \mu}\overline{w}\cdot V_{\nu}\otimes \overline{w}\cdot V_{\mu-\nu}\right)\\
&=m\left(\bigoplus_{\nu\in P; 0\neq \nu\neq \mu}V_{w(\nu)}\otimes V_{w(\mu-\nu)}\right)\\
&=m\left(\bigoplus_{\nu\in P; 0\neq \nu\neq w(\mu)}V_{\nu}\otimes V_{w(\mu)-\nu}\right).
\end{align*}
Hence we may assume that $\mu$ is the highest weight of $V$. Suppose that 
\[
m\left(\bigoplus_{\nu\in P; 0\neq \nu\neq \mu}V_{\nu}\otimes V_{\mu-\nu}\right)\neq V_{\mu}.
\]
Then,
\begin{align}
 m\left(\bigoplus_{\nu\in P; 0\neq \nu\neq \mu}V_{\nu}\otimes V_{\mu-\nu}\right)=\{0\}\label{eq:assumption}   
\end{align}
since $V_{\mu}$ is one-dimensional. Therefore, the surjectivity of $m$ implies that $m\left(V_{0}\otimes V_{\mu}\right)=V_{\mu}$ or $m\left(V_{\mu}\otimes V_{0}\right)=V_{\mu}$. Assume that  $m\left(V_{0}\otimes V_{\mu}\right)=V_{\mu}$. Then \cref{l:0wtsp} implies that there exist $s\in S$ and $0\neq v_0\in V_0$ such that $m((f_{s}e_{s}\cdot v_0)\otimes v_{\mu})=v_{\mu}$. We have 
\begin{align*}
    0=f_{s}e_{s}\cdot m(v_0\otimes v_{\mu})=m(f_{s}e_{s}\cdot (v_0\otimes v_{\mu}))=m((f_{s}e_{s}\cdot v_0)\otimes v_{\mu}+(e_{s}\cdot v_0)\otimes (f_{s}\cdot v_{\mu})).
\end{align*}
Here the first and third equalities follow from the assumption that $\mu$ is the highest weight of $V$. Therefore, 
\[
m((e_{s}\cdot v_0)\otimes (f_{s}\cdot v_{\mu}))=-v_\mu\neq 0.
\]
In particular, $m(V_{\alpha_s}\otimes V_{\mu-\alpha_s})\neq \{0\}$, which contradicts the equality \eqref{eq:assumption} since $\alpha_s$ is not a dominant integral weight when the rank of $\fg$ is greater than 1. The exactly parallel argument is applicable in the case when $m\left(V_{\mu}\otimes V_{0}\right)=V_{\mu}$. Hence the proof is completed. 
\end{proof}
\begin{thm}\label{t:genmin}
Let $V$ be a quasi-minuscule $\fg$-module. Assume that there exists a surjective homomorphism $m\colon V\otimes V\to V$ of $\fg$-modules satisfying $m(V_0\otimes V_0)=\{0\}$. Then all matrix coefficients of $V$ can be expressed as polynomials in generalized minors. 
\end{thm}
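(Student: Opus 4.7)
The plan is to show that for all weight vectors $f \in V^*$ and $v \in V$, the matrix coefficient $C^V(f,v)$ lies in the subalgebra generated by generalized minors. First, I would establish a base case: if $\wt(f) \neq 0$ and $\wt(v) \neq 0$, then since $V$ is quasi-minuscule, the weight spaces $V_{\wt(v)}$ and $V^*_{\wt(f)}$ are one-dimensional and spanned, respectively, by extremal vectors $v_{w'\lambda}$ and $f_{w\lambda}$, where $\lambda$ is the highest weight of $V$. Hence $C^V(f,v)$ is a scalar multiple of the generalized minor $\Delta_{w\lambda, w'\lambda}$.

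The main observation is the dualization of the hypothesis: $m(V_0 \otimes V_0) = 0$ is equivalent, by transposing, to
\[
m^*(V^*) \;\subset\; \bigoplus_{(\mu_1, \mu_2) \neq (0,0)} V^*_{\mu_1} \otimes V^*_{\mu_2},
\]
i.e.\ the $V^*_0 \otimes V^*_0$-component of $m^*(f)$ vanishes for every $f \in V^*$. Combined with \cref{l:Key}, which lets me write any $v \in V$ as $v = \sum_k m(v^{(1)}_k \otimes v^{(2)}_k)$ with $v^{(a)}_k \in V_{\neq 0}$, the $\fg$-equivariance of $m$ together with \eqref{eq:mult} yields the main reduction
\[
C^V(f,v) \;=\; \sum_{j,k} C^V(f^{(1)}_j, v^{(1)}_k)\cdot C^V(f^{(2)}_j, v^{(2)}_k),
\]
where $m^*(f) = \sum_j f^{(1)}_j \otimes f^{(2)}_j$ with each $f^{(i)}_j$ a weight vector.

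I would then handle the general case in two steps, ordered so as to avoid an apparent circularity. Step~A: if $f \in V^*_0$, then $m^*(f) \in (V^* \otimes V^*)_0$, and the dualized hypothesis forces $m^*(f) \in \bigoplus_{\mu \neq 0} V^*_\mu \otimes V^*_{-\mu}$; hence each $f^{(i)}_j$ can be taken of non-zero weight, so every factor in the reduction is a base-case generalized minor and $C^V(f,v)$ is a polynomial in generalized minors. Step~B: for arbitrary $f$, in the same reduction each factor $C^V(f^{(i)}_j, v^{(a)}_k)$ has $v^{(a)}_k \in V_{\neq 0}$, and $f^{(i)}_j$ is either of non-zero weight (yielding a generalized minor by the base case) or in $V^*_0$ (already handled by Step~A); in either case the factor, and therefore the product and the sum, lies in the subalgebra generated by generalized minors.

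I do not anticipate a serious computational obstacle. The only conceptual subtlety is the ordering of Steps~A and~B: handling $f \in V^*_0$ first is exactly the place where the dualization of the hypothesis is used essentially, and this is what unlocks the general case without requiring any auxiliary construction such as a multiplication on $V^*$ or a $\fg$-equivariant section of $m$ with extra properties.
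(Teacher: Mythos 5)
Your proposal is correct and follows essentially the same route as the paper: dualize the hypothesis to conclude $m^*((V^*)_0)$ has no $(V^*)_0\otimes (V^*)_0$-component, apply \cref{l:Key} to decompose $v$ through $m$ with nonzero-weight factors, expand $C^V(f,v)$ via $\fg$-equivariance of $m$ and \eqref{eq:mult}, and bootstrap the $f\in(V^*)_0$ case into the general case. The paper organizes the two cases as $\mu=0$ first (where it combines the dualized condition with \cref{l:Key}) and then $\nu=0$ (where it instead invokes $m(V_0\otimes V_0)=0$ directly to decompose $v$), but that is the same argument as your Steps A and B; your uniform use of \cref{l:Key} for the $v$-decomposition is a minor stylistic simplification, not a different method.
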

\begin{proof}
It suffices to show that $C^{V}(f, v)$ can be expressed as a polynomial in generalized minors for weight vectors $f\in (V^{\ast})_{\mu}$ and $v\in V_{\nu}$. If $\mu, \nu\in \wt (V)\setminus \{0\}$, then $C^{V}(f, v)$ is a generalized minor up to a constant. Hence we deal with the case when $\mu$ or $\nu$ equals $0$. 

Consider the $\fg$-module homomorphism $m^{\ast}\colon V^{\ast}\to (V\otimes V)^{\ast}\simeq V^{\ast}\otimes V^{\ast}$ induced by $m$. Then, the condition $m(V_0\otimes V_0)=\{0\}$ implies that 
\begin{align}
m^{\ast}((V^{\ast})_0)\subset \bigoplus_{\eta\in \wt (V)\setminus \{0\}}(V^{\ast})_{-\eta}\otimes (V^{\ast})_{\eta}.\label{eq:mdual} 
\end{align}
Assume that $\mu=0$, that is, $f\in (V^{\ast})_0$. Then, by \eqref{eq:mdual}, 
\[
m^{\ast}(f)=\sum_{\eta\in P; \eta\neq 0}f'_{\eta}\otimes f''_{-\eta},
\]
for some $f'_{\eta}\in (V^{\ast})_{-\eta}$ and $f''_{-\eta}\in (V^{\ast})_{\eta}$. 
On the other hand, by \cref{l:Key}, we have 
\[
v=m\left(\sum_{\eta\in P; 0\neq \eta\neq \nu}v'_{\eta}\otimes v''_{\nu-\eta}\right)
\]
for some $v'_{\eta}\in V_{\eta}$ and $v''_{\nu-\eta}\in V_{\nu-\eta}$.
Then 
\begin{align*}
    C^{V}(f, v)&=C^{V}\left(f, m\left(\sum\nolimits_{\eta'\in P: 0\neq \eta'\neq \nu}v'_{\eta'}\otimes v''_{\nu-\eta'}\right)\right)\\
    &=C^{V\otimes V}\left(m^{\ast}(f), \sum\nolimits_{\eta'\in P; 0\neq \eta'\neq \nu}v'_{\eta'}\otimes v''_{\nu-\eta'}\right)\\
    &=C^{V\otimes V}\left(\sum\nolimits_{\eta\in P; \eta\neq 0}f'_{\eta}\otimes f''_{-\eta}, \sum\nolimits_{\eta'\in P; 0\neq \eta'\neq \nu}v'_{\eta'}\otimes v''_{\nu-\eta'}\right)\\
    &=\sum_{\eta\in P; \eta\neq 0}\left(\sum_{\eta'\in P; 0\neq \eta'\neq \nu}C^{V}(f'_{\eta}, v'_{\eta'})C^{V}( f''_{-\eta},  v''_{\nu-\eta'})\right),
\end{align*}
and the matrix coefficients occurring in the right-hand side are all generalized minors up to a constant. Hence we have shown the assertion in this case. 

Assume that $\nu=0$, namely, $v\in V_0$. Then, the condition $m(V_0\otimes V_0)=\{0\}$ implies that 
\[
v=m\left(\sum_{\eta\in P; \eta\neq 0}v'_{\eta}\otimes v''_{-\eta}\right)
\]
for some $v'_{\eta}\in V_{\eta}$ and $v''_{-\eta}\in V_{-\eta}$. Hence, if we write $m^{\ast}(f)$ as $
\sum_{i\in I}f'_{i}\otimes f''_{i}$ 
by some $f'_{i}, f''_{i}\in V^{\ast}$ and a finite index set $I$, then 
\begin{align*}
    C^{V}(f, v)&=C^{V}\left(f, m\left(\sum\nolimits_{\eta\in P; \eta\neq 0}v'_{\eta}\otimes v''_{-\eta}\right)\right)\\
    &=C^{V\otimes V}\left(m^{\ast}(f), \sum\nolimits_{\eta\in P; \eta\neq 0}v'_{\eta}\otimes v''_{-\eta}\right)\\
    &=C^{V\otimes V}\left(\sum\nolimits_{i\in I}f'_{i}\otimes f''_{i}, \sum\nolimits_{\eta\in P; \eta\neq 0}v'_{\eta}\otimes v''_{-\eta}\right)\\
    &=\sum_{i\in I}\left(\sum_{\eta\in P; \eta\neq 0}C^{V}(f'_{i}, v'_{\eta})C^{V}( f''_{i},  v''_{-\eta})\right).
\end{align*} 
We have already shown that the matrix coefficients occurring in the right-hand side can be expressed as polynomials in generalized minors. Hence we completed the proof.
\end{proof}
\begin{thm}\label{t:EG}
When $\fg$ is of type $E_8$ or $G_2$, there exist a quasi-minuscule $\fg$-module $V$ and a surjective homomorphism $m\colon V\otimes V\to V$ of $\fg$-modules satisfying $m(V_0\otimes V_0)=\{0\}$. 
\end{thm}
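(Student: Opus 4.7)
The plan is to exhibit, in each case, a natural quasi-minuscule $\fg$-module $V$ together with a surjective $\fg$-equivariant homomorphism $m\colon V\otimes V\to V$ that kills $V_0\otimes V_0$. For type $E_8$ the map $m$ will be the Lie bracket itself (which kills $\fh\otimes \fh$ by commutativity of the Cartan); for type $G_2$ the map $m$ will factor through $\Lambda^2 V$, which automatically kills $V_0\otimes V_0\subset \mathrm{Sym}^2 V$ once $V_0$ is one-dimensional.

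For type $E_8$, I take $V=\fg$ the adjoint representation and $m=[\cdot,\cdot]\colon \fg\otimes\fg\to\fg$, which is a $\fg$-module homomorphism by the Jacobi identity. All three requirements are then immediate: (i) since $E_8$ is simply laced, $W$ acts transitively on $\Phi$, so $\wt(V)=\Phi\cup\{0\}=W\cdot\theta\cup\{0\}$ for the highest root $\theta$, whence $V$ is quasi-minuscule; (ii) $[\fg,\fg]=\fg$ for semisimple $\fg$ gives surjectivity; (iii) $V_0=\fh$ and $[\fh,\fh]=0$.

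For type $G_2$, with Kac's labeling so that $\alpha_1$ is short, I take $V=V(\varpi_1)$, the $7$-dimensional fundamental representation. Its weight set is the union of the six short roots and $\{0\}$, and the short roots form a single $W$-orbit $W\cdot \varpi_1$, so $V$ is quasi-minuscule with $\dim V_0=1$. To construct $m$, I would use the decomposition
\[
V\otimes V\cong V(2\varpi_1)\oplus V(\varpi_2)\oplus V(\varpi_1)\oplus \C,
\]
verifiable by the Weyl character formula (dimensions $27+14+7+1=49$), of which the sub-decomposition $\Lambda^2 V\cong V(\varpi_1)\oplus V(\varpi_2)$ (dimensions $7+14=21$) shows that $V$ appears with multiplicity one in $V\otimes V$ and that its unique copy lies in $\Lambda^2 V$. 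Then $m$ is defined as the composition $V\otimes V\twoheadrightarrow \Lambda^2 V\twoheadrightarrow V(\varpi_1)=V$; it is surjective by construction, and since $V_0$ is one-dimensional, $V_0\otimes V_0\subset \mathrm{Sym}^2 V$ is killed by the first projection.

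The only non-trivial verification is the $G_2$ tensor product decomposition, which is routine via characters, or can be replaced by the conceptual identification of $V$ with the imaginary octonions (so that $m$ becomes the octonionic cross product). I do not anticipate any obstacle beyond this standard bookkeeping.
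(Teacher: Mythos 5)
Your $E_8$ argument is identical to the paper's: adjoint representation with the Lie bracket, $V_0 = \fh$ abelian. Your $G_2$ argument reaches the same conclusion by a genuinely different and cleaner route. The paper constructs the copy of $V$ inside $V\otimes V$ by hand: it writes down an explicit highest weight vector $\widetilde{v}_{(0,1)}\in (V\otimes V)_{\varpi_2}$ annihilated by $e_1,e_2$, then computes $\iota(v_{(0,0)}) = f_2 f_1 f_2\cdot \widetilde{v}_{(0,1)}$ term-by-term in a weight basis and observes that no $V_0\otimes V_0$ component occurs; the map $m$ is then obtained by dualizing. You instead invoke the tensor decomposition $V\otimes V\cong V(2\varpi)\oplus V(\text{adjoint})\oplus V\oplus\C$ together with $\Lambda^2 V\cong V\oplus V(\text{adjoint})$ to see that the unique copy of $V$ in $V\otimes V$ sits inside $\Lambda^2 V$, and then note that $\dim V_0 = 1$ forces $V_0\otimes V_0\subset \mathrm{Sym}^2 V$, which is automatically killed. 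This avoids all explicit weight-vector bookkeeping and makes the mechanism transparent (it also explains at a glance why the $F_4$ case fails: there $\dim V_0 = 2$, so $V_0\otimes V_0\not\subset \mathrm{Sym}^2 V$, and indeed the paper's \cref{r:F4} confirms the only candidate $m$ does not kill $V_0\otimes V_0$). Both you and the paper note the octonionic interpretation. One small slip: in Kac's labeling (which the paper follows), $\alpha_1$ is the \emph{long} simple root for $G_2$ and $\alpha_2$ the short one, so the $7$-dimensional quasi-minuscule module is $V(\varpi_2)$, not $V(\varpi_1)$; this is purely a relabeling and does not affect the substance of your argument.
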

\begin{proof}
    In the case of type $E_8$, the desired objects are given by the adjoint representation $V=\fg$ together with the Lie bracket $m\colon \fg\otimes \fg\to \fg,\ x\otimes y\mapsto [x,y]$. Note that $V_0=\fg_0=\fh$ in this case. 

    In the case of type $G_2$, the irreducible $\fg$-module  $V=V(\varpi_2)$ is a $7$-dimensional quasi-minuscule $\fg$-module. Fix a highest weight vector $v_{(0,1)}=v_{\varpi_2}$, and set 
    \begin{align*}
        v_{(1,-1)}&\coloneqq f_2\cdot v_{(0,1)},&
        v_{(-1,2)}&\coloneqq f_1\cdot v_{(1,-1)},&
        v_{(0,0)}&\coloneqq f_2\cdot v_{(-1,2)},\\
        v_{(1,-2)}&\coloneqq \frac{1}{2}f_2\cdot v_{(0,0)},&
        v_{(-1,1)}&\coloneqq f_1\cdot v_{(1,-2)},&
        v_{(0,-1)}&\coloneqq f_2\cdot v_{(-1,1)}.
    \end{align*}
    Then these seven vectors form a basis of $V$ such that $\wt v_{(m, n)}=m\varpi_1+n\varpi_2$. Moreover, direct calculation shows that 
    \[
    \widetilde{v}_{(0,1)}:= v_{(0,1)}\otimes v_{(0,0)}- 2v_{(1,-1)}\otimes v_{(-1,2)}+2v_{(-1,2)}\otimes v_{(1,-1)}-v_{(0,0)}\otimes v_{(0,1)}\in V\otimes V
    \]
    satisfies $e_1\cdot \widetilde{v}_{(0,1)}=e_2\cdot \widetilde{v}_{(0,1)}=0$. Hence there exists an injective $\fg$-module homomorphism $\iota\colon V\to V\otimes V$ satisfying $\iota(v_{(0,1)})=\widetilde{v}_{(0,1)}$. Then 
    \begin{align*}
        \iota(v_{(0,0)})&=\iota(f_2f_1f_2\cdot v_{(0,1)})=f_2f_1f_2\cdot\widetilde{v}_{(0,1)}\\
        &=2(v_{(0,1)}\otimes v_{(0,-1)}+v_{(1,-1)}\otimes v_{(-1,1)}-v_{(-1,2)}\otimes v_{(1,-2)}\\
        &\phantom{===}+v_{(1,-2)}\otimes v_{(-1,2)}-v_{(-1,1)}\otimes v_{(1,-1)}-v_{(0,-1)}\otimes v_{(0,1)}).
    \end{align*}
    Therefore, the homomorphism of $\fg$-modules
    \[
    \iota^{\ast}\colon V^{\ast}\otimes V^{\ast}\simeq (V\otimes V)^{\ast}\to V^{\ast}
    \]
    induced by $\iota$ satisfies 
    \[
    \iota^{\ast}((V^{\ast})_0\otimes (V^{\ast})_0)=\{0\}.
    \]
    Hence $V^{\ast}(\simeq V)$ and $m=\iota^{\ast}$ satisfy the desired conditions. 
\end{proof}
\begin{proof}[Proof of \cref{t:mainminor} in the case of type $E_8$ and $G_2$]
Let $\fg$ be a simple Lie algebra of type $E_8$ or $G_2$. Consider the  quasi-minuscule $\fg$-module $V$ stated in \cref{t:EG}. Then the corresponding representation of $G$ is a faithful representation $\rho\colon G\to GL(V)$ which gives a closed immersion $G\to SL(V)$, since the center of $G$ is trivial in this case (see \cite[Lemma 22.1]{Borel}, \cite[Remark 11.2.16]{GW}, \cite[Corollary 1.13]{Brion}). Therefore, the coordinate ring $\C[G]$ is generated by the matrix coefficients of $V$ (see, for example, \cite[Proposition 2.1]{IOS23}), and hence, by the generalized minors thanks to \cref{t:genmin}. 
\end{proof}
\begin{rem}\label{r:F4}
    The Lie algebra $\fg$ of type $G_2$ is realized as the Lie algebra of derivations of the complexified Cayley algebra $\mathfrak{C}$. Hence there is an action of $\mathfrak{g}$ on $\mathfrak{C}$, and $\mathfrak{C}$ is isomorphic to $V(\varpi_2)\oplus V(0)$ as a $\mathfrak{g}$-module. Here the trivial $\fg$-module $V(0)$ corresponds to the space spanned by the unit $\mathbf{1}$ of $\mathfrak{C}$. Then the morphism $m$ in \cref{t:EG} is induced from the multiplication $m_{\mathfrak{C}}$ of $\mathfrak{C}$ as follows:
    \[
    m\colon V(\varpi_2)\otimes V(\varpi_2)\hookrightarrow \mathfrak{C}\otimes \mathfrak{C}\xrightarrow{m_{\mathfrak{C}}}\mathfrak{C}\simeq V(\varpi_2)\oplus V(0)
    \xrightarrow{\text{projection}} V(\varpi_2).
    \]
    
    When $\fg$ is of type $F_4$, there does not exist a homomorphism $m$ of $\fg$-modules satisfying the assumption in \cref{t:genmin}. Indeed, a quasi-minuscule $\fg$-module is isomorphic to the 26-dimensional irreducible module $V(\varpi_4)$ ($\dim V(\varpi_4)_0=2$). We have $V(\varpi_4)\otimes V(\varpi_4)\simeq V(2\varpi_4)\oplus V(\varpi_1)\oplus V(\varpi_3)\oplus V(\varpi_4)$. Therefore, a surjective homomorphism of $\fg$-modules $m\colon V(\varpi_4)\otimes V(\varpi_4)\to V(\varpi_4)$ is uniquely determined up to constant multiplication, and it is the only candidate for the homomorphism $m$ satisfying the assumption in \cref{t:genmin}. However, we have $m(V(\varpi_4)_0\otimes V(\varpi_4)_0)\neq \{0\}$. The Lie algebra $\mathfrak{g}$ of type $F_4$ is realized as the Lie algebra of derivations of the $27$-dimensional exceptional Jordan algebra $\mathfrak{J}$ over $\C$. See, for example, \cite[Section 2]{Yok} for its explicit presentation. Hence there is an action of $\mathfrak{g}$ on $\mathfrak{J}$, and $\mathfrak{J}$ is isomorphic to $V(\varpi_4)\oplus V(0)$ as a $\mathfrak{g}$-module. Here the trivial $\fg$-module $V(0)$ corresponds to the space spanned by the unit $\mathbf{1}$ of $\mathfrak{J}$. Then the multiplication $m_{\mathfrak{J}}$ of $\mathfrak{J}$ induces a surjective homomorphism of $\fg$-modules $m\colon V(\varpi_4)\otimes V(\varpi_4)\to V(\varpi_4)$ as follows:
    \[
    m\colon V(\varpi_4)\otimes V(\varpi_4)\hookrightarrow \mathfrak{J}\otimes \mathfrak{J}\xrightarrow{m_{\mathfrak{J}}}\mathfrak{J}\simeq V(\varpi_4)\oplus V(0)
    \xrightarrow{\text{projection}} V(\varpi_4).
    \]
    However, we can directly check that $m(V(\varpi_4)_0\otimes V(\varpi_4)_0)\neq \{0\}$ since the weight space $V(\varpi_4)_0$ corresponds to the space spanned by the trace-free diagonal matrices in $\mathfrak{J}$ under the presentation in \cite[Section 2]{Yok}. See \cite{Yok} for further details. 
\end{rem}
\bibliographystyle{alpha}

\end{document}